\makeatletter \@namedef{subjclassname@2010}{
  \textup{2010} Mathematics Subject Classification}
\newtheorem{thm}{Theorem}[section]
\newtheorem{cor}[thm]{Corollary}
\newtheorem{lem}[thm]{Lemma}
\newtheorem{pro}[thm]{Proposition}
\theoremstyle{remark}
\newtheorem{exa}[thm]{\textbf{Example}}
\theoremstyle{definition}
\newcommand{\R}{\mathbb{R}}
\newcommand{\C}{\mathbb{C}}
\begin{document}

\title[Commutativity of Unbounded Self-adjoint Operators]{Conditions Implying Commutativity of Unbounded Self-adjoint Operators and Related Topics}
\author[K. Gustafson and M. H. Mortad]{Karl Gustafson and MOHAMMED HICHEM MORTAD*}

\dedicatory{}
\thanks{* Corresponding author.}
\date{}
\keywords{Normal and Self-adjoint operators. Commutativity.  Fuglede
Theorem.}

\subjclass[2010]{Primary 47B25,
  47B15}

\address{Department of Mathematics, University of Colorado at Boulder, Campus
Box 395 Boulder, CO 80309-0395, USA.}
\email{Karl.Gustafson@colorado.edu, gustafs@euclid.colorado.edu.}

\address{Department of
Mathematics, University of Oran 1 (Ahmed Benbella), B.P. 1524, El
Menouar, Oran 31000, Algeria.\newline {\bf Mailing address}:
\newline Pr Mohammed Hichem Mortad \newline BP 7085 Seddikia Oran
\newline 31013 \newline Algeria}

\email{mhmortad@gmail.com, mortad@univ-oran.dz.}

\begin{abstract}
Let $B$ be a bounded self-adjoint operator and let $A$ be a
nonnegative self-adjoint unbounded operator. It is shown that if
$BA$ is normal, it must be self-adjoint and so must be $AB$.
Commutativity is necessary and sufficient for this result. If $AB$
is normal, it must be self-adjoint and $BA$ is essentially
self-adjoint. Although the two problems seem to be alike, two
different and quite interesting approaches are used to tackle them.
\end{abstract}

\maketitle

\section{Introduction}

In \cite{Mortad-PAMS2003}, the following result (among others) was
proved:

\begin{thm}\label{Mortad-2003 product self-adjoint}
Let $A$ be an unbounded self-adjoint operator and let $B$ be a
positive (or negative) bounded operator. If $AB$ (respectively $BA$)
is normal, then $AB$ (respectively $BA$) is self-adjoint.
\end{thm}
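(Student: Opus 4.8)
The plan is to first observe that the two assertions in the theorem are logically equivalent, so that only one needs a proof. Since $B$ is bounded and self-adjoint while $A$ is self-adjoint (hence closed and densely defined), the product rule for adjoints with a bounded factor gives $(BA)^*=A^*B^*=AB$ and, symmetrically, $(AB)^*=B^*A^*=BA$. Because the adjoint of a normal operator is normal and the adjoint of a self-adjoint operator is self-adjoint, the statement ``$AB$ normal $\Rightarrow AB$ self-adjoint'' transforms, upon taking adjoints, into ``$BA$ normal $\Rightarrow BA$ self-adjoint''. I would therefore treat only $N:=BA$, replacing $B$ by $-B$ if necessary so that $B\geq 0$. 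Note finally that, since $N^*=AB$, proving that $N$ is self-adjoint is exactly proving the commutation $AB=BA$.

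The engine of the argument is the positive square root $B^{1/2}$, which is bounded, self-adjoint and nonnegative. A direct check of domains and actions yields the exact intertwining
\[
(BA)\,B^{1/2}=B^{1/2}\bigl(B^{1/2}AB^{1/2}\bigr),
\]
where $S:=B^{1/2}AB^{1/2}$ is symmetric. When $B$ is invertible, $S$ is genuinely self-adjoint, hence normal, and I would invoke the Fuglede--Putnam theorem for unbounded normal operators: from $N\,B^{1/2}=B^{1/2}S$ with $N$ and $S$ normal it follows that $N^*B^{1/2}=B^{1/2}S^*=B^{1/2}S$ as well. Comparing the two identities gives $N\,B^{1/2}=N^*B^{1/2}$, and since $B^{1/2}$ is a bounded bijection and normality guarantees $D(N)=D(N^*)$, this forces $N=N^*$, i.e. $BA$ is self-adjoint. (Equivalently, in the invertible case $BA=B^{1/2}SB^{-1/2}$ exhibits $BA$ as similar to the self-adjoint operator $S$, so $\sigma(BA)=\sigma(S)\subseteq\R$; a normal operator with real spectrum is self-adjoint.)

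The main obstacle is the case in which $B$ is not invertible, which is genuinely delicate for two reasons. First, for $A$ not semibounded the symmetric operator $B^{1/2}AB^{1/2}$ need not be essentially self-adjoint, so the Fuglede--Putnam step cannot be applied verbatim; I expect the normality of $BA$ itself to be what rescues essential self-adjointness of $\overline{B^{1/2}AB^{1/2}}$, and establishing this is where I would concentrate the work. Second, even granting the intertwining and $N\,B^{1/2}=N^*B^{1/2}$, with $B^{1/2}$ non-injective one only recovers the commutation on $\overline{\operatorname{ran}B}=(\ker B)^{\perp}$, so a separate argument is needed on $\ker B$; concretely one must show that $A$ leaves $\ker B$ invariant, which is precisely what $AB=BA$ demands since $ABx=0$ for $x\in\ker B$. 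I would handle these points by splitting $H=\overline{\operatorname{ran}B}\oplus\ker B$ and using the normality of $BA$ to control the off-diagonal behaviour of $A$, rather than by a crude perturbation $B+\varepsilon I$, which does not preserve normality of the product and so cannot be passed to the limit directly.
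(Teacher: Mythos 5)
There are two genuine gaps. First, your opening ``equivalence'' rests on a false adjoint identity. With $B$ bounded one has exactly $(BA)^*=A^*B^*=AB$, but \emph{not} $(AB)^*=BA$: only $(AB)^*=\overline{BA}$ holds (Lemma \ref{clos{BA}=(AB)* lemma}), and $BA$ need not be closed, as the paper's own example shows ($B=A^{-1}$ gives $AB=I$ normal while $BA\subsetneq I$ is not closed). Because of this asymmetry the two halves of the theorem are not adjoints of one another, and you chose the wrong half to prove: knowing ``$BA$ normal $\Rightarrow BA$ self-adjoint'' does not settle the $AB$ case, since when $AB$ is normal the normal operator $(AB)^*=\overline{BA}$ is a closure, not literally a product (bounded positive)$\cdot$(self-adjoint), so your statement cannot be applied to it. The reduction runs only the other way: from the $AB$ case, $BA$ normal gives $(BA)^*=AB$ normal, hence self-adjoint, whence $BA=\overline{BA}=(AB)^*=AB$. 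Second, within the half you do treat, only the case of boundedly invertible $B$ is actually proved. For non-invertible $B$ --- the generic situation --- the Fuglede--Putnam theorem needs \emph{both} intertwined operators normal; your $S=B^{1/2}AB^{1/2}$ is merely symmetric, and your proposals (deducing essential self-adjointness of $\overline{S}$ from normality of $BA$, then controlling $\ker B$ via a decomposition $H=\overline{\ran}\,B\oplus\ker B$) are a program, not an argument. You correctly identify these as the hard points, but they remain unproven, and they are exactly where the content of the theorem lies.

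For comparison, the proof in \cite{Mortad-PAMS2003} (the same computation reappears in Lemma \ref{|B|A subset A|B| lemma} of the present paper) sidesteps both difficulties by intertwining $N$ with $N^*$ through $B$ itself rather than through an auxiliary symmetric operator: one checks $B(AB)=BAB\subseteq\overline{BA}\,B$, and since $AB$ and $\overline{BA}=(AB)^*$ are both normal (the adjoint of a normal operator is normal), Theorem \ref{Fuglede Putnam classic} yields $B(AB)^*\subseteq(AB)B$, i.e. $B^2A\subseteq AB^2$; in the $BA$ case the exact identity $B(AB)=(BA)B$ together with Corollary \ref{Fuglede Putnam classic TN=MT} gives the same relation. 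Positivity of $B$ then enters exactly once: $B=(B^2)^{1/2}$, so the square-root commutation theorem (\cite{Bernau JAusMS-1968-square root}) upgrades $B^2A\subseteq AB^2$ to $BA\subseteq AB$. From this, $BA\subseteq AB=(BA)^*$ makes $BA$ symmetric, and a normal symmetric operator is self-adjoint; in the $AB$ case $BA\subseteq AB$ gives $(AB)^*=\overline{BA}\subseteq AB$, and $D(AB)=D\bigl((AB)^*\bigr)$ by normality forces equality. No invertibility of $B$, no splitting over $\ker B$, and no essential self-adjointness questions arise. Your $B^{1/2}$-similarity argument is a clean alternative when $B$ is invertible, but to complete your route you would need to replace the intertwiner $B^{1/2}$ (linking $N$ to the symmetric $S$) by $B$ (linking $N$ to the normal $N^*$), which is precisely the published proof.
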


The foregoing results have applications for example to the problem
of commutativity up to a factor (see \cite{Chellali-Mortad}). They
also provide us with a tool for commutativity of self-adjoint
operators (see the proof of Theorem \ref{Mortad-2003 product
self-adjoint} in \cite{Mortad-PAMS2003}).

The "$AB$ case" was generalized in \cite{Mortad-PAMS2003} to the
case of two unbounded self-adjoint operators $A$ and $B$. Later in
\cite{Mortad-IEOT-2009} it was shown that under the same conditions,
the normality of $BA$ does not imply anymore its self-adjointness.
But, there are still two cases to look at, namely: Keeping $B$
bounded, but taking $A$ to be positive (both self-adjoint):
\begin{enumerate}
  \item Does $AB$ normal imply $AB$ self-adjoint?
  \item Does $BA$ normal imply $BA$ self-adjoint?
\end{enumerate}

These are the main questions asked in this paper. Commutativity
relations play a vital role in their consideration. Other issues
that arise are the domains and closedness of unbounded operators.

To prove the results we first assume basic notions and results on
unbounded operator theory. For basic references, see \cite{Con},
\cite{Kat}, \cite{RUD}, \cite{SCHMUDG-book-2012} and
\cite{Weidmann}. Recall that a densely defined unbounded operator
$A$ is: normal if it is closed and $AA^*=A^*A$, symmetric if
$A\subset A^*$ and self-adjoint if $A=A^*$. Observe that both
"symmetric" and "normal" are weaker than "self-adjoint", but one of
the easy and nice results is: A normal and symmetric operator must
be self-adjoint. Also, remember that the commutativity between a
bounded $B$ and an unbounded $A$ is expressed as $BA\subset AB$.

Before finishing the introduction, we recall some other results
(needed in the sequel) which cannot be considered as elementary.

The first is the well-known Fuglede-Putnam theorem:

\begin{thm}[for a proof, see \cite{Con}]\label{Fuglede Putnam classic}
If $T$ is a bounded operator and and $N$ and $M$ are unbounded and
normal, then
\[TN\subset MT \Longrightarrow TN^*\subset M^*T.\]
\end{thm}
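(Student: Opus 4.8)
The plan is to transfer the problem to \emph{bounded} normal operators, settle it there by the classical Liouville-type argument, and then carry the conclusion back. First I would pass to resolvents. Both $N$ and $M$ are closed, and I fix a scalar $\mu$ lying in both resolvent sets (one may take $\mu=i$ whenever the operators are self-adjoint, which covers every application made later in this paper). Subtracting $\mu T$ from the hypothesis yields $T(N-\mu)\subset(M-\mu)T$; for $w\in\mathcal H$ the vector $x=(N-\mu)^{-1}w$ lies in $D(N)$, so $(M-\mu)Tx=T(N-\mu)x=Tw$, and applying $(M-\mu)^{-1}$ gives the everywhere-defined bounded identity
\[(M-\mu)^{-1}T=T(N-\mu)^{-1}.\]
The gain is that the resolvent of a normal operator is again normal, so $R_N:=(N-\mu)^{-1}$ and $R_M:=(M-\mu)^{-1}$ are bounded normal operators intertwined by $T$.

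On the bounded level I would run Rosenblum's argument. From $R_MT=TR_N$ one gets $R_M^{k}T=TR_N^{k}$ and hence, the exponentials now being norm-convergent power series, $e^{\bar\lambda R_M}T=Te^{\bar\lambda R_N}$ for every $\lambda\in\C$. Setting $F(\lambda)=e^{\lambda R_M^{*}}\,T\,e^{-\lambda R_N^{*}}$ and inserting $T=e^{-\bar\lambda R_M}Te^{\bar\lambda R_N}$, the outer factors merge into $e^{\lambda R_M^{*}-\bar\lambda R_M}$ and $e^{\bar\lambda R_N-\lambda R_N^{*}}$; normality makes both exponents skew-adjoint, so these factors are unitary and $\|F(\lambda)\|\le\|T\|$ throughout $\C$. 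Being entire and bounded, $F$ is constant by Liouville, so $0=F'(0)=R_M^{*}T-TR_N^{*}$. In terms of the original operators this reads $(M^{*}-\bar\mu)^{-1}T=T(N^{*}-\bar\mu)^{-1}$.

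It remains to undo the reduction. Fix $x\in D(N^{*})$, which equals $D(N)$ since $N$ is normal, and write $x=(N^{*}-\bar\mu)^{-1}z$ with $z=(N^{*}-\bar\mu)x$. The identity just obtained gives $Tx=(M^{*}-\bar\mu)^{-1}Tz$, which therefore lies in $\ran (M^{*}-\bar\mu)^{-1}=D(M^{*})$; applying $(M^{*}-\bar\mu)$ and cancelling the common $\bar\mu Tx$ terms leaves $M^{*}Tx=TN^{*}x$. This is precisely $TN^{*}\subset M^{*}T$.

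The Liouville step is classical, so the real care goes into the two interfaces with the unbounded operators: one must check that composing $T(N-\mu)\subset(M-\mu)T$ with resolvents produces a genuine bounded \emph{equality} rather than a mere inclusion, and, in the reverse passage, that the bounded resolvent identity truly recovers the domain assertion $Tx\in D(M^{*})$ and not just a norm estimate. A further subtlety, which I would flag rather than gloss over, is that a general normal operator may have empty resolvent set, so the scalar $\mu$ need not exist; in that generality one replaces the single resolvent by an approximation through the truncations $\int_{|z|\le n}z\,dE(z)$ of the spectral measure $E$ of $N$, and it is this spectral bookkeeping---not the analytic core---that is the genuine obstacle. (The two-operator statement can also be folded into the one-operator case by passing to the block operator $N\oplus M$ together with the nilpotent $S\colon(h,k)\mapsto(0,Th)$, for which $S(N\oplus M)\subset(N\oplus M)S$ is equivalent to $TN\subset MT$.)
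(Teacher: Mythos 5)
Your analytic core is sound: when a common resolvent point $\mu\in\rho(N)\cap\rho(M)$ exists, the passage $T(N-\mu)\subset(M-\mu)T\Rightarrow (M-\mu)^{-1}T=T(N-\mu)^{-1}$ is a genuine bounded equality, the Rosenblum--Liouville step for the bounded normal resolvents is classical and correctly executed (normality is exactly what makes the merged exponential factors unitary), and your reverse passage does recover the domain statement $Tx\in D(M^{*})$, using $D(N^{*})=D(N)$. But the proof proves a weaker theorem than the one stated, and you know it: the scalar $\mu$ need not exist. An unbounded normal operator can have empty resolvent set (multiplication by $z$ on $L^{2}$ of a measure whose support is all of $\C$), and even when $\rho(N)$ and $\rho(M)$ are both nonempty they can be disjoint (spectra equal to the closed left and right half-planes). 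Moreover, your fallback claim that the self-adjoint case ``covers every application made later in this paper'' is false: the paper invokes this theorem with $N=AB$, $M=\overline{BA}$ (in Lemma \ref{|B|A subset A|B| lemma}) and its corollary with $N=(BA)^{*}$, $M=BA$ (in Theorem \ref{Main theorem 1}) --- normal but not self-adjoint operators, about whose spectra nothing is known a priori. So the gap is live even for the uses made of the result here.

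The truncation repair you gesture at does not work as stated: setting $N_{n}=\int_{|z|\le n}z\,dE_{N}(z)$, nothing gives you $TN_{n}=M_{n}T$, because relations of the form $TE_{N}(\Delta)=E_{M}(\Delta)T$ are essentially \emph{equivalent} to the conclusion you are trying to prove. The correct completion --- and it is the proof in \cite{Con}, which is all the paper offers for this statement --- combines the block-operator reduction you mention in your final parenthesis with a decomposition by \emph{reducing subspaces} rather than truncations: form $\tilde N=N\oplus M$ and $S(h,k)=(0,Th)$, so that $S\tilde N\subset\tilde N S$; let $P_{n}$ be the spectral projections of $\tilde N$ onto the annuli $\{n-1\le|z|<n\}$, which commute with $\tilde N$ by the spectral theorem (no resolvent needed) and restrict $\tilde N$ to bounded normal operators $\tilde N_{n}$; the inclusion $S\tilde N\subset\tilde N S$ then passes to each block, giving $(P_{m}SP_{n})\tilde N_{n}=\tilde N_{m}(P_{m}SP_{n})$, the bounded Fuglede--Putnam theorem (your Liouville argument, now unconditionally available) handles each block, and a square-summability check reassembles the blocks into $S\tilde N^{*}\subset\tilde N^{*}S$, i.e. $TN^{*}\subset M^{*}T$. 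In short: your argument is a complete and elegant proof only under the additional hypothesis $\rho(N)\cap\rho(M)\neq\emptyset$, and the ``spectral bookkeeping'' you defer is not a side issue but the actual content of the unbounded theorem.
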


\begin{cor}\label{Fuglede Putnam classic TN=MT} If $T$ is a bounded operator and and
$N$ and $M$ are unbounded and normal, then
\[TN=MT \Longrightarrow TN^*=M^*T.\]
\end{cor}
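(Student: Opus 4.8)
The plan is to obtain the two inclusions $TN^*\subset M^*T$ and $M^*T\subset TN^*$ by different means: the first directly from Fuglede--Putnam, the second by a domain count. Since the hypothesis $TN=MT$ certainly entails $TN\subset MT$, Theorem \ref{Fuglede Putnam classic} applies verbatim and delivers $TN^*\subset M^*T$. This already secures half of the assertion: every vector of $D(TN^*)$ lies in $D(M^*T)$ and the two operators agree there. It therefore remains only to promote this inclusion to an equality, and for that it suffices to verify that the two domains in fact coincide.

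The crucial point is that \emph{normality}, rather than mere closedness, forces the domains to match. First I would record the elementary domain identities that follow from $T$ being bounded and everywhere defined, namely $D(TN)=D(N)$ while $D(MT)=\{x:Tx\in D(M)\}$. Because $TN=MT$ as operators, their domains coincide, so $D(N)=\{x:Tx\in D(M)\}$. Now I would invoke the defining feature of normal operators that $D(N)=D(N^*)$ and $D(M)=D(M^*)$ (indeed with equality of the graph norms). Chaining these facts gives
\[ D(TN^*)=D(N^*)=D(N)=\{x:Tx\in D(M)\}=\{x:Tx\in D(M^*)\}=D(M^*T), \]
so the two operators appearing in the inclusion $TN^*\subset M^*T$ have exactly the same domain.

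Finally I would conclude by the trivial observation that an operator inclusion together with equality of domains is an equality, whence $TN^*=M^*T$. I expect the only genuine subtlety to be the domain bookkeeping in the displayed chain; the two nontrivial inputs are the inclusion form of Fuglede--Putnam (Theorem \ref{Fuglede Putnam classic}) and the equality $D(N)=D(N^*)$ valid for normal operators. It is worth noting that no second application of Fuglede--Putnam, and no adjoint manipulation of the product $MT$, is required --- which is fortunate, since for a bounded $T$ that fails to be surjective one has in general only $(MT)^*\supset T^*M^*$ and not the equality $(MT)^*=T^*M^*$, so the naive ``take adjoints'' route would stall.
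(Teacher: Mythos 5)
Your argument is correct and complete. The paper states this corollary without proof, as an immediate consequence of Theorem \ref{Fuglede Putnam classic}, and your derivation supplies exactly the bookkeeping it leaves implicit: the theorem applied to $TN\subset MT$ gives $TN^*\subset M^*T$, while the hypothesis $TN=MT$ (so $D(N)=\{x:Tx\in D(M)\}$) together with the normality identities $D(N)=D(N^*)$ and $D(M)=D(M^*)$ forces $D(TN^*)=D(M^*T)$, and an inclusion between operators with equal domains is an equality. Your closing observation is also sound: a second application of Fuglede--Putnam is awkward here because $MT\subset TN$ does not have the intertwining shape the theorem requires, and the naive adjoint route stalls on the fact that for bounded, non-invertible $T$ one has in general only $(MT)^*\supset T^*M^*$ (equality would need, e.g., $T$ boundedly invertible, as in Lemma \ref{(AB)*=B*A* B INVERTIBLE!!}), so the domain count is indeed the right mechanism for upgrading the inclusion to an equality.
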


The next is a generalization of the Fuglede theorem.

\begin{thm}[Mortad, \cite{Mortad-PAMS2003}]\label{Fuglede-Mortad-2003}
Let $T$ be an unbounded self-adjoint operator with domain $D(T)$. If
$N$ is an unbounded normal operator such that $D(N)\subset D(T)$,
then
\[TN\subset N^*T\Longrightarrow TN^*\subset NT.\]
\end{thm}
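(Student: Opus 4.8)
The plan is to prove Theorem~\ref{Fuglede-Mortad-2003}, a generalization of Fuglede's theorem to the unbounded setting where the intertwining operator $T$ is itself unbounded (but self-adjoint) rather than bounded. The hypothesis $TN \subset N^*T$ says that for every $x \in D(N)$ we have $x \in D(T)$ (which is automatic from $D(N) \subset D(T)$), that $Tx \in D(N^*) = D(N)$, and that $TNx = N^*Tx$. We want to conclude the ``conjugate'' relation $TN^* \subset NT$. Since $N$ is normal, $D(N) = D(N^*)$ and $\|Nx\| = \|N^*x\|$ for all $x \in D(N)$, and these facts will be used repeatedly.

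The main idea I would pursue is to reduce the unbounded problem to the classical bounded Fuglede--Putnam theorem (Theorem~\ref{Fuglede Putnam classic} / Corollary~\ref{Fuglede Putnam classic TN=MT}) by passing to resolvents or bounded functions of the operators. First I would try to manufacture a \emph{bounded} intertwining relation out of the given unbounded one. The natural candidates are the bounded resolvents $R_T = (T - i)^{-1}$ of the self-adjoint $T$ and suitable bounded functions of the normal $N$. The goal is to show that the unbounded relation $TN \subset N^*T$ implies a bounded relation of the form $S M \subset M' S$ or $SM = M'S$ with $S$ bounded and $M, M'$ normal, to which the classical theorem applies, and then to transfer the resulting conjugate relation back to the unbounded operators. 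Concretely, I would attempt to commute $T$ past the resolvent of $N$: starting from $TNx = N^*Tx$ on $D(N)$, apply $(N-\lambda)^{-1}$ on the left and manipulate to obtain an intertwining relation between $(T-i)^{-1}$ (or $T$ restricted appropriately) and the resolvents of $N$ and $N^*$.

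The key steps, in order, would be: (1) record the domain and norm-equality consequences of $N$ being normal, namely $D(N)=D(N^*)$ and $\|Nx\|=\|N^*x\|$; (2) from $TN \subset N^*T$, derive an intertwining identity involving bounded operators — the cleanest route is to show that the bounded operator $B = (T-i)^{-1}$ satisfies $BN^* \subset NB$ type relation, or dually that $N$ and $N^*$ intertwine through $B$ in a way matching the hypotheses of Corollary~\ref{Fuglede Putnam classic TN=MT}; (3) invoke the classical Fuglede--Putnam theorem to pass to the adjoints/conjugates of the normal operators; and (4) unwind the resolvent, using self-adjointness of $T$ to recover $TN^* \subset NT$ on $D(N^*)=D(N)$.

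The hard part will be step (2): justifying that one may legitimately move the unbounded self-adjoint operator $T$ (or its resolvent) past $N$ and $N^*$ while keeping careful track of domains, since the inclusion $TN \subset N^*T$ is a genuine \emph{inclusion} of operators with possibly unequal domains, not an equality, so the bounded relation one extracts may itself only be an inclusion. The inclusion $D(N) \subset D(T)$ is precisely what guarantees that the compositions are densely defined on $D(N)$ and that the manipulations make sense; I expect this hypothesis to be the linchpin that prevents domain pathologies. An alternative, should the resolvent approach prove delicate, is to argue spectrally: use the spectral measure of the self-adjoint $T$ together with the normality of $N$ to verify the conjugate intertwining directly on a core, but the resolvent/bounded-Fuglede reduction is likely to be the most economical and is the approach I would carry out first.
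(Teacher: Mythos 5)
You have not produced a proof; you have produced a plan whose pivotal step is both unexecuted and, in the specific form you propose, false. (Note first that the paper itself offers no proof to compare against: Theorem~\ref{Fuglede-Mortad-2003} is quoted from \cite{Mortad-PAMS2003}, so your argument must stand on its own.) Your step (2) --- extracting a bounded intertwining relation through $B=(T-i)^{-1}$ --- carries the entire mathematical content, you yourself flag it as ``the hard part,'' and you never carry it out. Worse, the relation you hope to manufacture does not hold: if $x\in D(N)$ with $Nx\in D(T)$, the hypothesis gives $(T-i)Nx=N^*Tx-iNx$, whereas $N^*(T-i)x=N^*Tx-iN^*x$ (when defined), so the would-be relation $(T-i)N\subset N^*(T-i)$, equivalently $BN^*\subset NB$ after inverting, would force $Nx=N^*x$, i.e.\ $N$ self-adjoint. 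The asymmetry of the hypothesis --- $N$ on one side, $N^*$ on the other --- is exactly what defeats the standard resolvent (or Cayley transform) reduction in $T$: no relation of the form $BM\subset M'B$ with $M,M'\in\{N,N^*\}$ falls out of this algebra. Your fallback of passing to resolvents of $N$ instead is also unavailable in general, since an unbounded normal operator can have empty resolvent set (its spectrum, the support of its spectral measure, may be all of $\C$); and the ``spectral'' alternative you mention is only gestured at, not argued.

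There is additionally a misreading of the hypothesis in your opening paragraph: $TN\subset N^*T$ does \emph{not} assert that for every $x\in D(N)$ one has $Tx\in D(N^*)$ and $TNx=N^*Tx$. The domain of $TN$ is $\{x\in D(N):Nx\in D(T)\}$, which may be a proper subset of $D(N)$; the inclusion $D(N)\subset D(T)$ puts $x$, not $Nx$, in $D(T)$, so the identity is only guaranteed on this smaller set. Since your strategy rests precisely on careful domain bookkeeping, this is not a cosmetic slip: the bounded relation you would feed to Theorem~\ref{Fuglede Putnam classic} or Corollary~\ref{Fuglede Putnam classic TN=MT} must be derived from an inclusion valid only where $Nx\in D(T)$, and nothing in your sketch addresses how to get past that. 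In summary, steps (1), (3) and (4) are routine or standard, but step (2) --- the reduction of the unbounded intertwining to a bounded one --- is missing, and the particular reduction you propose provably cannot work; the actual proof in \cite{Mortad-PAMS2003} requires a genuinely different argument.
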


We also note

\begin{lem}\label{(AB)*=B*A* B INVERTIBLE!!} If $A$ and $B$ are densely defined with
inverse $B^{-1}$ in $B(H)$. Then $(AB)^* = B^* A^*$.  In particular,
if $U$ is unitary, then
\[(UAU)^*=U^*(UA)^*=U^*A^*U^*.\]
\end{lem}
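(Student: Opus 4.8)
The plan is to establish the equality $(AB)^* = B^*A^*$ by proving the two inclusions separately: the inclusion $B^*A^* \subset (AB)^*$ is soft and holds for essentially any densely defined factors, while the reverse inclusion $(AB)^* \subset B^*A^*$ is where the hypothesis $B^{-1}\in B(H)$ does the real work. Before starting one should note that $(AB)^*$ is meaningful, i.e.\ that $AB$ is densely defined: since $B^{-1}\in B(H)$ carries $D(A)$ into $D(AB)$ and both $D(A)$ and $D(B)$ are dense, a short continuity argument gives $\overline{D(AB)} = H$.

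For the easy inclusion I would take $y \in D(B^*A^*)$ (so $y\in D(A^*)$ and $A^*y\in D(B^*)$) and compute, for every $x\in D(AB)$,
\[
\langle ABx, y\rangle = \langle Bx, A^*y\rangle = \langle x, B^*A^*y\rangle ,
\]
which shows $y\in D((AB)^*)$ with $(AB)^*y = B^*A^*y$; hence $B^*A^* \subset (AB)^*$.

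The substantive step is the reverse inclusion, and here invertibility of $B$ is essential. The key observation is that $B^{-1}$ maps $D(A)$ bijectively onto $D(AB)$: for $u\in D(A)$ the vector $x=B^{-1}u$ satisfies $Bx=u\in D(A)$, so $x\in D(AB)$ and $ABx=Au$. Given $y\in D((AB)^*)$ with $z:=(AB)^*y$, I would substitute $x=B^{-1}u$ to obtain, for all $u\in D(A)$,
\[
\langle Au, y\rangle = \langle AB(B^{-1}u), y\rangle = \langle B^{-1}u, z\rangle = \langle u, (B^{-1})^*z\rangle .
\]
Invoking the standard identity $(B^{-1})^* = (B^*)^{-1}$ (valid because $B$ is densely defined and injective with dense range while $B^{-1}$ is bounded), the right-hand side equals $\langle u, (B^*)^{-1}z\rangle$. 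By the definition of the adjoint this forces $y\in D(A^*)$ and $A^*y=(B^*)^{-1}z$; since $\ran((B^*)^{-1})=D(B^*)$, we get $A^*y\in D(B^*)$ and $B^*A^*y = B^*(B^*)^{-1}z = z = (AB)^*y$. Thus $(AB)^*\subset B^*A^*$, and together with the first inclusion this yields $(AB)^* = B^*A^*$. This reverse inclusion is the main obstacle: it is precisely the adjoint-of-inverse identity $(B^{-1})^*=(B^*)^{-1}$—unavailable for a non-invertible bounded $B$—that upgrades the general one-sided inclusion to an equality, so care is needed to check that the substitution $x=B^{-1}u$ exhausts all of $D(A)$ and that the range of $(B^*)^{-1}$ is exactly $D(B^*)$.

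For the unitary case I would apply the lemma with $U$ as the (bounded, boundedly invertible) inner factor: writing $UAU=(UA)U$ gives $(UAU)^* = U^*(UA)^*$, and then the standard rule $(UA)^*=A^*U^*$ for a bounded everywhere-defined outer factor gives $U^*(UA)^* = U^*A^*U^*$, completing the chain.
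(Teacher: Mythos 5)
Your proof is correct: the density of $D(AB)$, the soft inclusion $B^*A^*\subset (AB)^*$, and the reverse inclusion via the substitution $x=B^{-1}u$ together with the identity $(B^{-1})^*=(B^*)^{-1}$ are all valid, as is the application to $(UAU)^*=U^*(UA)^*=U^*A^*U^*$ using $(UA)^*=A^*U^*$ for bounded $U$. The paper states this lemma without proof, treating it as a known fact, and your argument is precisely the standard textbook proof of it, so there is no divergence to report.
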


\begin{lem}
If $A$ and $B$ are densely defined and closed such that either $A$
is invertible or $B$ is bounded, then $AB$ is closed.
\end{lem}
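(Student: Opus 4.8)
The plan is to prove closedness directly from the sequential definition of the graph: I take a sequence $x_n \in D(AB)$ with $x_n \to x$ and $ABx_n \to y$ in $H$, and aim to conclude that $x \in D(AB)$ and $ABx = y$. Recall that $D(AB) = \{x \in D(B) : Bx \in D(A)\}$, so the real content is to show that the limit $x$ lands in this domain, after which the identity $ABx = y$ follows automatically. The two hypotheses are handled separately, but by essentially the same mechanism: one of the two factors is continuous (or has a continuous inverse), which lets me transport convergence across it and then invoke the closedness of the other factor.

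First I would treat the case where $B$ is bounded. Since $B$ is continuous, $x_n \to x$ gives $Bx_n \to Bx$. Setting $z_n := Bx_n \in D(A)$, I then have $z_n \to Bx$ while $Az_n = ABx_n \to y$. The closedness of $A$ forces $Bx \in D(A)$ and $A(Bx) = y$; hence $x \in D(AB)$ and $ABx = y$, as required.

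Next I would treat the case where $A$ is invertible. Here $A$ is closed and bijective from $D(A)$ onto $H$, so its inverse $A^{-1}$ is itself closed and everywhere defined, whence $A^{-1} \in B(H)$ by the closed graph theorem. Applying the bounded operator $A^{-1}$ to $ABx_n \to y$ yields $Bx_n = A^{-1}(ABx_n) \to A^{-1}y$. Combined with $x_n \to x$ and the closedness of $B$, this gives $x \in D(B)$ and $Bx = A^{-1}y$. Since $A^{-1}y \in \ran(A^{-1}) = D(A)$, I get $Bx \in D(A)$, so $x \in D(AB)$, and finally $ABx = A(A^{-1}y) = y$.

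The argument is short, and the one point requiring care — which I expect to be the only genuine obstacle — is verifying that the candidate limit $x$ actually belongs to $D(AB)$ rather than merely satisfying the two separate convergences $x_n \to x$ and $ABx_n \to y$. In the bounded case this is handled by the continuity of $B$; in the invertible case it hinges on the identification $\ran(A^{-1}) = D(A)$, which is exactly what guarantees $Bx \in D(A)$.
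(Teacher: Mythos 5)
Your proof is correct; the paper states this lemma without proof, and your sequential graph argument is exactly the standard one it implicitly relies on. One small point to tighten: in the bounded case the step $Bx_n \to Bx$ presupposes $x \in D(B)$, which does hold here — a densely defined, closed and bounded operator has closed, hence full, domain — or alternatively observe that $(Bx_n)$ is Cauchy and invoke the closedness of $B$ to get $x \in D(B)$ and $Bx_n \to Bx$ in one stroke.
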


For other related results, see \cite{DevNussbaum-von-Neumann},
\cite{DevNussbaum}, \cite{Gustafson-Mortad-2014-Bull-SM},
\cite{Nussbaum-TAMS-commu-unbounded-normal-1969} and
\cite{Schmudgen-strong-comm-SA-Unbd-PAMS}.

\section{Main Results}

\begin{thm}\label{Main theorem 1}
Let $A$ and $B$ be two self-adjoint operators where only $B$ is
bounded. Assume further that $A$ is positive and that $BA$ is
normal. Then both $BA$ and $AB$ are self-adjoint. Besides one has
$AB=BA$.
\end{thm}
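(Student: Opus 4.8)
The plan is to prove the stated equivalence in two directions, the substantive one being that normality of $BA$ forces $A$ and $B$ to commute. The converse is immediate: if $AB=BA$ then $BA=(BA)^*$, so $BA$ is self-adjoint and in particular normal, which is why commutativity is \emph{necessary and sufficient}. For the main implication I would first record what normality hands over for free. Since $B$ is bounded and everywhere defined, a direct computation gives $(BA)^*=A^*B^*=AB$. Normality then forces $D(BA)=D((BA)^*)$, i.e. $D(A)=D(AB)=\{x:Bx\in D(A)\}$; in particular $B$ maps $D(A)$ into itself, and $\|BAx\|=\|ABx\|$ for every $x\in D(A)$. The entire theorem now reduces to the single claim $BAx=ABx$ for all $x\in D(A)$: granting it, $BA\subseteq(BA)^*=AB$, so $BA$ is symmetric; a symmetric normal operator is self-adjoint (as recalled in the Introduction), whence $BA=(BA)^*=AB$ and both products are self-adjoint.

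It is worth noting that the purely formal Fuglede machinery does not settle the commutativity claim here: the only intertwining relations one can write between $BA$, $AB$ and $A$ come from rearranging associative products, such as $(AB)A\subseteq A(BA)$, and these hold trivially without using the norm identity $\|BAx\|=\|ABx\|$. The normality content is genuinely analytic, and to extract it I would pass to spectra. A normal operator is self-adjoint exactly when its spectrum is real, so it suffices to prove $\sigma(BA)\subseteq\R$. This is where positivity of $A$ is essential. Writing $A=A^{1/2}A^{1/2}$ with $A^{1/2}\ge 0$ self-adjoint, I would compare $BA$ with the symmetric operator $S:=A^{1/2}BA^{1/2}$. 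Factoring $BA=(BA^{1/2})A^{1/2}$ and $S=A^{1/2}(BA^{1/2})$ exhibits $BA$ and $S$ as the two products $PQ$ and $QP$ of $P=BA^{1/2}$ and $Q=A^{1/2}$, so that $\sigma(BA)\setminus\{0\}=\sigma(S)\setminus\{0\}$; informally, $BA$ is similar to the symmetric $S$ through the positive operator $A^{1/2}$. If $S$ is self-adjoint its spectrum is real, and then $\sigma(BA)\subseteq\R$, so by normality $BA$ is self-adjoint. The mechanism is that the indefinite bounded $B$ is conjugated into a symmetric object by the square root of $A$, and normality upgrades ``real spectrum'' to ``self-adjoint.''

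The hard part is making this rigorous when $A$ is unbounded and not invertible, so that $A^{-1/2}$ is not bounded and the conjugation $BA=A^{-1/2}SA^{1/2}$ is only formal. Two points need care. First, one must pin down the domain of $S=A^{1/2}BA^{1/2}$ and prove it is self-adjoint; this is delicate precisely because $B$ is merely self-adjoint rather than positive, so $S$ is not manifestly of the form $T^*T$, and I expect that the normality of $BA$ must itself be fed back in to force self-adjointness of $S$. Second, the kernel of $A$ must be handled: I would split $H=\ker A\oplus(\ker A)^\perp$, observe that $\ker A$ contributes only the real spectral value $0$, and work on $(\ker A)^\perp$, where $A$ is positive and injective and the spectral identity $\sigma(BA)\setminus\{0\}=\sigma(S)\setminus\{0\}$ together with the similarity can be justified --- for instance by regularising with $(A+\varepsilon)^{1/2}$ and passing to the limit, or by arguing directly with approximate eigenvectors of the normal operator $BA$ and the normality identity $\|(BA-\lambda)x\|=\|(AB-\overline{\lambda})x\|$. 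Once $\sigma(BA)\subseteq\R$ is secured the remaining deductions are formal, and together with the trivial converse this shows that commutativity is necessary and sufficient for $BA$ to be normal.
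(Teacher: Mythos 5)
Your opening reduction is correct: $(BA)^*=AB$, normality gives $D(A)=D(AB)$ and $\|BAx\|=\|ABx\|$ on $D(A)$, and everything reduces to $BA\subset AB$, after which "symmetric plus normal implies self-adjoint" and maximal normality finish the job. But the core of your argument --- getting $\sigma(BA)\subset\R$ via $S=A^{1/2}BA^{1/2}$ --- has gaps that amount to the whole difficulty, and you acknowledge but do not close them. First, the argument needs $S$ to be \emph{self-adjoint}: a closed symmetric operator that is not self-adjoint has spectrum equal to a closed half-plane or all of $\C$, so mere symmetry of $S$ yields no spectral information whatsoever; moreover it is not even clear that $D(S)=\{x\in D(A^{1/2}):BA^{1/2}x\in D(A^{1/2})\}$ is dense, and proving $S$ self-adjoint is essentially as hard as the theorem itself (you concede the normality of $BA$ would have to be "fed back in," with no mechanism given). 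Second, the identity $\sigma(PQ)\setminus\{0\}=\sigma(QP)\setminus\{0\}$ is a bounded-operator fact; with both factors unbounded it requires additional hypotheses (e.g.\ closedness of both products, which is unverified for $S$), and your proposed remedies do not work as stated: regularizing by $(A+\varepsilon)^{1/2}$ gives at best strong-resolvent-type convergence, under which spectra are not stable, and the approximate-eigenvector computation stalls because $\|Ax_n\|$ is uncontrolled (the natural real quantity $\langle BAx_n,Ax_n\rangle$ has error term $o(1)\cdot\|Ax_n\|$). Third, the splitting $H=\ker A\oplus(\ker A)^{\perp}$ reduces $A$ but there is no reason it reduces $B$ or $BA$; invariance of $\ker A$ under $B$ is part of the commutativity being proved, so one cannot simply "work on $(\ker A)^{\perp}$."

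Your structural claim that "the purely formal Fuglede machinery does not settle the commutativity claim" is also exactly backwards for this theorem: the paper's proof is entirely Fuglede-based. From the trivial identity $A(BA)=(AB)A=(BA)^{*}A$ and $D(BA)=D(A)$, the generalized Fuglede theorem (Theorem \ref{Fuglede-Mortad-2003}) --- whose hypothesis is precisely the normality you thought this route could not exploit --- yields $A^{2}B\subset BA^{2}$, and the classical Fuglede--Putnam theorem gives $B^{2}A=AB^{2}$. From these the paper shows $\overline{BA^{2}}$ is normal (using that self-adjoint operators are maximally symmetric), deduces that $A^{2}B=(BA^{2})^{*}$ is self-adjoint, hence $BA^{2}\subset A^{2}B$, then invokes Bernau's square-root theorem together with $A\geq 0$ to pass from commutation with $A^{2}$ to $BA\subset AB$, and finally uses maximal normality to upgrade to $BA=AB$. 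It is the \emph{other} product, $AB$ (Theorem \ref{Main theorem 2}), for which the Fuglede route genuinely fails --- the paper's example $Nf(x)=-i(1+|x|)f'(x)$, $Af(x)=(1+|x|)f(x)$ shows the needed intertwining implication $NA\subset AN^{*}\Rightarrow N^{*}A\subset AN$ is false --- and there a polar-decomposition argument is used instead. So your proposal, besides its unresolved analytic gaps, misdiagnoses which of the two problems resists the Fuglede approach.
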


\begin{proof}
We may write
\[A(BA)=(AB)A=(BA)^*A.\]
Since $BA$ is normal, $(BA)^*$ is normal too. Since $D(BA)=D(A)$,
Theorem \ref{Fuglede-Mortad-2003} applies and yields
\[A(BA)^*\subset BAA\]
\text{ or }
\begin{equation}\label{inequation 1}
A^2B\subset BA^2.
\end{equation}

Let us transform the previous into a commutativity between $B$ and
$A^2$ (i.e. $BA^2\subset A^2B$).

Since $BA$ and $(BA)^*$ are normal, Corollary \ref{Fuglede Putnam
classic TN=MT} allows us to write
\[B(BA)^*=B(AB)=(BA)B\Longrightarrow B(BA)=(BA)^*B\]
or
\begin{equation}\label{equation 2}
B^2A=AB^2.
\end{equation}

This tells us that both $B^2A$ and $AB^2$ are self-adjoint.
Continuing we note that

\[B^2A^2=AB^2A=A^2B^2\]

and

\begin{equation}\label{equation B2A4=A4B2}
B^2A^4=B^2A^2A^2=A^2B^2A^2=A^4B^2.
\end{equation}

To prove $B$ commutes with $A^2$, we first show that
$\overline{BA^2}$ is normal. We have

\begin{align*}
(BA^2)^*BA^2= & A^2BBA^2\\
\supset &A^2BA^2B \text{ (by Inclusion (\ref{inequation 1}))}\\
\supset &A^2A^2BB \text{ (by Inclusion (\ref{inequation 1}))}\\
=& A^4B^2\\
=&B^2A^4.
\end{align*}
Passing to adjoints gives

\[(\overline{BA^2})^*\overline{BA^2}=(BA^2)^*\overline{BA^2}\subset
[(BA^2)^*BA^2]^*\subset (B^2A^4)^*=A^4B^2.\]

But $A^4B^2$ is symmetric by Equation (\ref{equation B2A4=A4B2}) (it
is even self-adjoint). Since $\overline{BA^2}$ is closed,
$(\overline{BA^2})^*\overline{BA^2}$ is self-adjoint, and since
self-adjoint operators are maximally symmetric, we immediately
obtain
\begin{equation}
\label{overline BA^2 half normal 1}
(\overline{BA^2})^*\overline{BA^2}=A^4B^2.
\end{equation}

Similarly, we may obtain
\[B^2A^4=A^4B^2=A^2A^2BB\subset A^2BA^2B\subset BA^2A^2B=BA^2(BA^2)^*\]
and passing to adjoints yields

\[\overline{BA^2}(\overline{BA^2})^*=\overline{BA^2}(BA^2)^*\subset
[BA^2(BA^2)^*]^*\subset (B^2A^4)^*=A^4B^2.\]

Similar arguments as above imply that

\begin{equation}\label{overline BA^2 half normal 2}
\overline{BA^2}(\overline{BA^2})^*=A^4B^2.
\end{equation}

By Equations (\ref{overline BA^2 half normal 1}) \& (\ref{overline
BA^2 half normal 2}), we see that $\overline{BA^2}$ is normal and
hence we deduce that $(\overline{BA^2})^*=(BA^2)^*=A^2B$ is normal
too.

Since $A^2B$ is densely defined, we may adjoint Relation
(\ref{inequation 1}) to obtain
\[(A^2B)^*\supset (BA^2)^*=A^2B\]
from which $A^2B$ is symmetric. Besides, it is clearly closed. Since
we have just seen that $A^2B$ is normal, we infer that $A^2B$ is
self-adjoint. Thus, we have arrived at the basic inclusion and
commutativity relation
\[BA^2\subset \overline{BA^2}=(A^2B)^*=A^2B=(BA^2)^*.\]

In particular, we then know from Theorem 10 in \cite{Bernau
JAusMS-1968-square root} (or \cite{Kat}) and the positivity of $A$
that $B$ commutes with $A$, that is,
\[BA\subset AB~(=(BA)^*).\]
But both $BA$ and $(BA)^*$ are normal. Since normal operators are
maximally normal, we obtain $BA=AB$.

Accordingly,
\[BA=AB=(BA)^*=(AB)^*,\]
and this completes the proof.
\end{proof}

Now, we turn to the case of $AB$ normal (keeping all the other
assumptions, except $BA$ normal, as those of Theorem \ref{Main
theorem 1}). The proof is very simple if we impose the very strong
condition of the closedness of $BA$. We have

\begin{cor}
Let $A$ and $B$ be two self-adjoint operators where only $B$ is
bounded. Assume further that $A$ is positive, $AB$ is normal and
that $BA$ is closed. Then both $AB$ and $BA$ are self-adjoint.
Besides one has $AB=BA$.
\end{cor}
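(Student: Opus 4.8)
The plan is to reduce the Corollary to the already-proved Theorem \ref{Main theorem 1} by showing that the closedness hypothesis on $BA$, combined with the normality of $AB$, forces $BA$ itself to be normal. Once that is established, the conclusion is immediate: $A$ is positive self-adjoint and $B$ is bounded self-adjoint, which are precisely the standing hypotheses of Theorem \ref{Main theorem 1}, so it applies verbatim.

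First I would record the adjoint of $BA$. Since $B$ is bounded and everywhere defined while $A$ is densely defined, the product $BA$ is densely defined with $D(BA)=D(A)$, and the standard adjoint formula for a product whose outer (last-applied) factor is bounded gives
\[(BA)^*=A^*B^*=AB,\]
using $A=A^*$ and $B=B^*$. Taking adjoints once more and invoking the hypothesis that $BA$ is closed, so that $(BA)^{**}=\overline{BA}=BA$, I would obtain the crucial identity
\[(AB)^*=(BA)^{**}=BA.\]
This is the \emph{only} place the closedness of $BA$ enters, and it is exactly what shortens the whole argument.

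Next I would exploit the normality of $AB$. By definition $AB$ is closed and satisfies $(AB)(AB)^*=(AB)^*(AB)$; substituting $(AB)^*=BA$ turns this into the operator identity $(AB)(BA)=(BA)(AB)$. I would then verify that $BA$ is normal: it is closed by hypothesis and densely defined, and
\[(BA)(BA)^*=(BA)(AB)=(AB)(BA)=(BA)^*(BA),\]
where the outer equalities use $(BA)^*=AB$ and the middle equality is precisely the commutation just derived from the normality of $AB$. Hence $BA$ is normal.

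Finally, since $A$ is a positive unbounded self-adjoint operator, $B$ is a bounded self-adjoint operator, and $BA$ has now been shown to be normal, Theorem \ref{Main theorem 1} yields directly that both $BA$ and $AB$ are self-adjoint and that $AB=BA$. The only real content lies in the first step, transferring the closedness of $BA$ into the equality $(AB)^*=BA$; after that, establishing normality of $BA$ and invoking Theorem \ref{Main theorem 1} are routine. I anticipate no genuine obstacle, which matches the observation that under the strong assumption that $BA$ is closed the proof is very simple.
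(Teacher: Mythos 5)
Your proposal is correct and takes essentially the same route as the paper: both use $(BA)^*=AB$ and the closedness of $BA$ to conclude that $BA=(BA)^{**}=(AB)^*$ is normal, and then invoke Theorem \ref{Main theorem 1}. The only cosmetic difference is that the paper cites the general fact that the adjoint of a normal operator is normal, whereas you verify $(BA)(BA)^*=(BA)^*(BA)$ by direct substitution, which is just that same fact unwound in this special case.
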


\begin{proof}
Since $AB$ is normal, and $B$ is bounded, $(BA)^*$ is clearly
normal. Hence so is
\[(BA)^{**}=\overline{BA}=BA.\]
By Theorem \ref{Main theorem 1}, $BA$ is self-adjoint. Therefore,
\[BA=(BA)^*=AB,\]
that is, $AB$ is self-adjoint.
\end{proof}

One may wonder that there are so many assumptions that $AB$ normal
would certainly imply that $BA$ is closed. This is not the case as
seen just below:

\begin{exa}
Let $A$ be a self-adjoint, positive and boundedly invertible
unbounded operator. Let $B$ be its (bounded) inverse. So $B$ too is
self-adjoint. It is then clear
\[AB=I \text{ and } BA\subset I.\]
Hence $AB$ is self-adjoint (hence normal!) but $BA$ is not closed.
\end{exa}

A natural question is: What if $BA$ is not closed, can we still show
that the normality of $AB$ implies its self-adjointness? As in
Theorem \ref{Main theorem 1}, to show that $AB$ is self-adjoint, it
suffices to show that $BA\subset AB$. One of the ways of obtaining
this is via $BA^2\subset A^2B$ which may be obtained if for instance
we have an intertwining result of the type
\[NA\subset AN^*\Longrightarrow N^*A\subset AN\]
where $N$ is an unbounded normal operator playing the role of $AB$
and $A$ (and also $B$) is self-adjoint. Such an intertwining
relation is, however, not true in general as seen in the next
example (we also note that none of the existing unbounded versions
of the Fuglede-Putnam theorem, as \cite{Mortad-PAMS2003},
\cite{Mortad-Fuglede-Putnam-CAOT-2011},
\cite{Palio-Fuglede-Mortad-2013},
\cite{Paliogiannis-2015-newest-Fuglede} and \cite{STO}, allows us to
get this desired "inclusion").

\begin{exa}(cf. \cite{Mortad-Fuglede-Putnam-CAOT-2011})
Define the following operators $A$ and $N$ by
\[Af(x)=(1+|x|)f(x)\text{ and } Nf(x)=-i(1+|x|)f'(x)\]
(with $i^2=-1$) respectively on the domains
\[D(A)=\{f\in L^2(\R): (1+|x|)f\in L^2(\R)\}\]
and
\[D(N)=\{f\in L^2(\R): (1+|x|)f'\in L^2(\R)\}.\]

Then $A$ is self-adjoint and positive (admitting even an everywhere
defined inverse) and $N$ is normal. We then find that

\[AN^*f(x)=NAf(x)=-i(1+|x|)\text{sgn}(x)f(x)-i(1+|x|)^2f'(x)\]
for any $f$ in the \textit{equal} domains
\[D(AN^*)=D(NA)=\{f\in L^2(\R): (1+|x|)f\in L^2(\R),~(1+|x|)^2f'\in L^2(\R)\}\]
and thus
\[AN^*=NA.\]

However
\[AN\not\subset N^*A \text{ and } AN\not\supset N^*A\]
for
\[ANf(x)=-i(1+|x|)^2f'(x)\]
whereas
\[N^*Af(x)=-2i\text{sgn}(x)(1+|x|)f(x)-i(1+|x|)^2f'(x).\]
\end{exa}

Thus the method of proof of Theorem \ref{Main theorem 1} could not
be applied to the case $AB$ and the approach then had to be
different.  After some investigation of possible counterexamples we
were able instead to establish the affirmative result as follows:

\begin{thm}\label{Main theorem 2}
Let $A$ and $B$ be two self-adjoint operators where only $B$ is
bounded. Assume further that $A$ is positive and that $AB$ is
normal. Then both $\overline{BA}$ and $AB$ are self-adjoint. Besides
one has $AB=\overline{BA}$.
\end{thm}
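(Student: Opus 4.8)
The plan is to reduce the entire statement to the single commutation relation $BA\subset AB$. First I would record the adjoint identities available here: since $B$ is bounded and self-adjoint and $A=A^*$, a direct computation gives $(BA)^*=AB$, and taking adjoints once more yields $(AB)^*=\overline{BA}$. As $AB$ is assumed normal, its adjoint $\overline{BA}=(AB)^*$ is normal too, so all three assertions of Theorem \ref{Main theorem 2} will follow at once if $AB$ turns out to be self-adjoint. Moreover, \emph{if} one proves $BA\subset AB$, then $\overline{BA}\subset\overline{AB}=AB$, i.e. $(AB)^*\subset AB$; since a normal operator satisfies $D(AB)=D((AB)^*)$, this is an inclusion of operators with equal domains and therefore forces $(AB)^*=AB=\overline{BA}$. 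Thus everything is equivalent to showing that the bounded self-adjoint $B$ commutes with $A$.

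The first substantive step uses the Fuglede--Putnam theorem in its inclusion form, Theorem \ref{Fuglede Putnam classic}, with $T=B$, $N=AB$ and $M=\overline{BA}$. For $x\in D(AB)$ one has $Bx\in D(A)\subset D(\overline{BA})$, so the hypothesis $B(AB)\subset(AB)^*B$ is readily verified, and the conclusion reads $B(AB)^*\subset AB^2$. Restricting to the core $D(A)$, on which $(AB)^*=\overline{BA}$ agrees with $BA$, this gives $B^2A\subset AB^2$; that is, the bounded self-adjoint operator $B^2$ commutes with $A$. A standard commutant argument now applies: a bounded operator commuting with the self-adjoint $A$ commutes with the whole spectral resolution of $A$, so the von Neumann algebra generated by $B^2$ lies in that commutant, whence $|B|=(B^2)^{1/2}$ and all spectral projections $F_n:=E_{|B|}([1/n,\|B\|])$ also commute with $A$.

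With $|B|$ commuting with $A$, I would split off $\ker B$, where the desired commutation is trivial, and work on $(\ker B)^\perp$, where $B=V|B|$ with $V=\operatorname{sgn}(B)$ a self-adjoint unitary commuting with $|B|$ (hence with $A$). On each reducing subspace $H_n=\ran F_n$ the operator $|B|$ restricts to a boundedly invertible $|B|_n$, while $A,V$ restrict to $A_n,V_n$. The restriction $N_n$ of the normal operator $AB=AV|B|$ is again normal, and since $|B|_n$ is bounded, invertible and commutes with $W_n:=A_nV_n$, we may write $W_n=N_n|B|_n^{-1}$, a product of the normal $N_n$ with a bounded operator commuting with it (the commutation with $N_n^*$ being supplied by the classical Fuglede--Putnam theorem); hence $W_n$ is normal. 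Using Lemma \ref{(AB)*=B*A* B INVERTIBLE!!} one computes $W_nW_n^*=A_n^2$ and $W_n^*W_n=V_nA_n^2V_n$, so normality of $W_n$ says exactly that $V_n$ commutes with $A_n^2$. By the square-root theorem already invoked for Theorem \ref{Main theorem 1} (Bernau, \cite{Bernau JAusMS-1968-square root}) and the positivity $A_n\ge 0$, it follows that $V_n$ commutes with $A_n$; letting $n\to\infty$, so that $F_n\uparrow I$, yields that $V$ commutes with $A$. Consequently $B=V|B|$ commutes with $A$, which is what the reduction demanded.

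The main obstacle is the non-invertibility of $B$. It is precisely this that blocks the naive route of conjugating $AB$ into the manifestly symmetric $A^{1/2}BA^{1/2}$, and that makes the adjoint bookkeeping delicate (note $(AB)^*\neq BA$ in general, only $(AB)^*=\overline{BA}$). This is what forces the detour through $B^2$, the functional-calculus passage from $B^2$ to $|B|$, and the localisation to the subspaces $H_n$ on which $|B|$ becomes boundedly invertible, so that the product $N_n|B|_n^{-1}$ can be formed and recognised as normal. Checking that these restrictions are mutually compatible and reassemble correctly as $n\to\infty$ is the step that will require the most care.
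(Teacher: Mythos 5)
Your proposal is correct in substance, and after a shared opening it takes a genuinely different middle route from the paper. Both arguments start identically: $(BA)^*=AB$ and $\overline{BA}=(AB)^*$ (the paper's Lemma \ref{clos{BA}=(AB)* lemma}), the Fuglede--Putnam application $B(AB)\subset \overline{BA}B\Longrightarrow B^2A\subset AB^2$, and the upgrade to $|B|A\subset A|B|$ (you argue this by the standard commutant/spectral-calculus route; the paper quotes \cite{Jablonski et al 2014}); both also finish with Bernau's square-root theorem, the positivity of $A$, and the observation that $BA\subset AB$ plus $D(AB)=D((AB)^*)$ forces self-adjointness. Where you diverge: the paper stays global, using that the polar factor $U$ of the self-adjoint $B$ can be taken to be a self-adjoint unitary on all of $H$, and then proving the chain $(AB)^*=UABU$ (Lemma \ref{(AB)*=UABU lemma}), $|AB|=A|B|$ (Lemma \ref{|AB|=A|B| lemma}), $UAB$ normal (Lemma \ref{UAB normal lemma}), $U|AB|=|AB|U$ (Lemma \ref{U|AB|=|AB|U lemma}), from which $BA\subset AB$ drops out. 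You instead localize: split off $\ker B$, truncate by the spectral projections $F_n=E_{|B|}([1/n,\|B\|])$ so that $|B|_n$ is boundedly invertible, identify $W_n=A_nV_n=N_n|B|_n^{-1}$, prove $W_n$ normal, compute $W_nW_n^*=A_n^2$ and $W_n^*W_n=V_nA_n^2V_n$ via Lemma \ref{(AB)*=B*A* B INVERTIBLE!!}, conclude $V_nA_n^2=A_n^2V_n$ and hence $V_nA_n\subset A_nV_n$, and let $n\to\infty$ (this limit does work: for $x\in D(A)$, $VF_nx=F_nVx\to Vx$ and $AVF_nx=F_nVAx\to VAx$, so closedness of $A$ gives $VA\subset AV$). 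Your truncation buys exactness -- $|B|_n^{-1}$ exists, $(N_n|B|_n^{-1})^*$ is computable, and the key commutation of $V_n$ with $A_n^2$ is transparent -- at the price of reduction bookkeeping ($F_n$ commutes with $A$ and $B$, so $H_n$ reduces $AB$ and the part $N_n$ is normal) and the limiting step; the paper's route avoids all truncation but pays with the sequence of maximal-normality and maximal-symmetry arguments around $UAB$ and $|AB|$.

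Two repairs are needed in your write-up, neither fatal. First, the parenthetical asserting that $V=\operatorname{sgn}(B)$ commutes with $|B|$ ``(hence with $A$)'' is a slip: commuting with $|B|$ implies nothing about $A$, and $V$ commuting with $A$ is precisely what the entire $W_n$ construction is designed to prove (your later argument does prove it, so only the parenthesis must go, else the proof would be circular). Second, ``a product of an unbounded normal operator with a commuting bounded operator is normal'' is not a quotable fact and must be argued: from $|B|_nN_n\subset N_n|B|_n$ and Fuglede, $|B|_n$ -- hence $C:=|B|_n^{-1}$ -- commutes with the spectral measure of $N_n$; then $C^{\pm 1}D(N_n)\subset D(N_n)$, so $D(N_nC)=D(N_n)=D(N_n^*)=D\bigl((N_nC)^*\bigr)$, where $(N_nC)^*=CN_n^*$ by Lemma \ref{(AB)*=B*A* B INVERTIBLE!!}, and $\|N_nCx\|=\|N_n^*Cx\|=\|CN_n^*x\|$ on this common domain, which is the standard characterization of normality for closed densely defined operators. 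With these two points fixed, your proof is complete and is a legitimate alternative to the paper's Lemmas \ref{(AB)*=UABU lemma}--\ref{BA subset AB lemma}.
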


To prove it, we need a few lemmas which are also interesting in
their own right.

\begin{lem}\label{clos{BA}=(AB)* lemma}
Let $A,B$ be self-adjoint and $B\in B(H)$. If $AB$ is densely
defined, then we have:
\[\overline{BA}=(AB)^*.\]
\end{lem}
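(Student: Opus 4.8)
The plan is to compute $(BA)^*$ explicitly and then take adjoints a second time; the identity $\overline{BA}=(AB)^*$ will drop out once the right product-adjoint formula is in place. First I would note that, because $B$ is bounded and everywhere defined, $D(BA)=D(A)$, which is dense since $A$ is self-adjoint. Hence $BA$ is densely defined and $(BA)^*$ is a genuine operator, so there is something to compute.

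The key step is the adjoint-of-a-product formula in the favourable case where the outer factor is bounded: for a densely defined operator $A$ and a bounded everywhere-defined operator $B$ one has $(BA)^*=A^*B^*$. The inclusion $A^*B^*\subset (BA)^*$ is automatic from the general product rule, and the boundedness of $B$ (via the identity $\langle BAx,y\rangle=\langle Ax,B^*y\rangle$ for $x\in D(A)$) upgrades it to equality. Invoking self-adjointness, $A^*=A$ and $B^*=B$, this reads simply $(BA)^*=AB$.

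It then remains to use the hypothesis that $AB$ is densely defined. This says precisely that $(BA)^*$ is densely defined, so $BA$ is closable and its closure agrees with its bi-adjoint, $\overline{BA}=(BA)^{**}$. Taking adjoints on both sides of $(BA)^*=AB$ now gives $\overline{BA}=(BA)^{**}=(AB)^*$, which is exactly the assertion.

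The only genuine obstacle is the careful application of the product-adjoint identity: one must verify that equality, and not merely the inclusion $A^*B^*\subset(BA)^*$, holds, and this is exactly the point at which the assumption $B\in B(H)$ is consumed. Beyond that, one must remember that it is precisely the density of $D(AB)$ that licenses replacing $\overline{BA}$ by $(BA)^{**}$; the rest is direct substitution from $A=A^*$ and $B=B^*$.
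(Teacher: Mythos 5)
Your proof is correct and takes essentially the same route as the paper: both rest on the identity $(BA)^*=A^*B^*=AB$ (valid since $B\in B(H)$), then use the density of $D(AB)=D\bigl((BA)^*\bigr)$ to conclude $BA$ is closable and take adjoints to get $\overline{BA}=(BA)^{**}=(AB)^*$. The paper merely compresses into one line what you have spelled out in full detail.
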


\begin{proof}This easily follows from
\[(BA)^*=AB\Longrightarrow (AB)^*=(BA)^{**}=\overline{BA}.\]
\end{proof}

\textit{In all the coming lemmas we assume that $A$ and $B$ are two
self-adjoint operators such that $B\in B(H)$ and that $AB$ is
normal.}

\begin{lem}\label{|B|A subset A|B| lemma}
We have:
\[|B|A\subset A|B|.\]
\end{lem}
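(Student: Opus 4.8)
The plan is to reduce the claim to the statement that $B^2$ commutes with $A$, and then to bootstrap from $B^2$ up to $|B|=\sqrt{B^2}$ by approximating the square root through polynomials. The reduction is natural because $B$ is bounded self-adjoint, so $|B|=(B^2)^{1/2}$, and the square root of a \emph{bounded} positive operator is accessible by elementary means.

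First I would record the normality of the companion operator. Since $AB$ is normal it is densely defined, so Lemma~\ref{clos{BA}=(AB)* lemma} gives $(AB)^*=\overline{BA}$; as the adjoint of a normal operator is normal, $\overline{BA}$ is normal with $(\overline{BA})^*=(AB)^{**}=AB$. Next I would apply the Fuglede--Putnam theorem in its inclusion form (Theorem~\ref{Fuglede Putnam classic}) with $T=B$, $N=AB$ and $M=\overline{BA}$. The hypothesis $TN\subset MT$ is exactly $B(AB)\subset\overline{BA}\,B$, which follows from a direct domain check: if $Bx\in D(A)$ then $Bx\in D(A)=D(BA)\subset D(\overline{BA})$, and both sides send $x$ to $BABx$. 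Fuglede--Putnam then yields $TN^*\subset M^*T$, i.e. $B\,\overline{BA}\subset AB^2$. Since $B^2A=B(BA)\subset B\,\overline{BA}$, this produces the commutativity relation
\[
B^2A\subset AB^2 .
\]

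Finally I would transfer commutativity from $B^2$ to $|B|$. Because $B^2$ is bounded and positive, $\sqrt{t}$ is a uniform limit on $[0,\|B\|^2]$ of polynomials $p_n$ with $p_n(0)=0$, whence $p_n(B^2)\to|B|$ in operator norm. Each $p_n(B^2)$ is a polynomial in $B^2$ with no constant term and hence commutes with $A$ by iterating $B^2A\subset AB^2$. For $x\in D(A)$ one then has $p_n(B^2)x\to|B|x$ together with $Ap_n(B^2)x=p_n(B^2)Ax\to|B|Ax$, and the closedness of $A$ forces $|B|x\in D(A)$ and $A|B|x=|B|Ax$. This is precisely $|B|A\subset A|B|$.

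The main obstacle, and the reason one cannot simply invoke the equality version of Fuglede--Putnam (Corollary~\ref{Fuglede Putnam classic TN=MT}) as in Theorem~\ref{Main theorem 1}, is that $B(AB)=\overline{BA}\,B$ \emph{fails} as an equality: the domain of $B(AB)$ is $\{x:Bx\in D(A)\}$, whereas that of $\overline{BA}\,B$ is $\{x:B^2x\in D(A)\}$. One is therefore forced to work with inclusions throughout and to keep careful track of domains; it is the closedness of $A$ that legitimises the concluding limit passage to $|B|$.
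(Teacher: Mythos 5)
Your proof is correct, and its backbone is exactly the paper's: you check $B(AB)\subset \overline{BA}\,B$, observe that $\overline{BA}=(AB)^*$ is normal, and apply the inclusion form of the Fuglede--Putnam theorem (Theorem~\ref{Fuglede Putnam classic}) with $T=B$, $N=AB$, $M=\overline{BA}$ to get $B^2A\subset AB^2$ --- this is verbatim the paper's first two steps. Where you genuinely diverge is the passage from $B^2$ to $|B|$: the paper disposes of this step by citing Jab{\l}o\'{n}ski--Jung--Stochel \cite{Jablonski et al 2014} or \cite{mortad-commutatvity-devinatz-2013}, results formulated for rather general (unbounded) settings, whereas you exploit the boundedness of $B$ to give a self-contained argument, taking polynomials $p_n\to\sqrt{t}$ uniformly on $[0,\|B\|^2]$, so that $p_n(B^2)\to|B|$ in norm, iterating $B^2A\subset AB^2$ to get $p_n(B^2)Ax=Ap_n(B^2)x$ for $x\in D(A)$, and invoking the closedness of the self-adjoint $A$ to pass to the limit. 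That limit argument is sound (the hypothesis $p_n(0)=0$ is actually superfluous here, since the identity trivially commutes with $A$), and it buys independence from the cited literature at no real cost, since all that is needed is that $B^2$ is a \emph{bounded} positive operator. One quibble with a side remark: your stated domain $D(\overline{BA}\,B)=\{x:B^2x\in D(A)\}$ is not justified; the correct description is $\{x:Bx\in D(\overline{BA})\}$, and the reason $B(AB)=\overline{BA}\,B$ may fail as an equality is simply that $D(\overline{BA})$ can strictly contain $D(BA)=D(A)$. Since your argument only ever uses the inclusion, this slip is harmless.
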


\begin{proof}
We may write
\[B(AB)=BAB\subset \overline{BA}B.\]
Since both $AB$ and $\overline{BA}$ are normal, Theorem \ref{Fuglede
Putnam classic} yields
\[B(AB)^*\subset (\overline{BA})^*B=(BA)^*B \text{ or merely } B^2A\subset AB^2.\]
Finally, by \cite{Jablonski et al 2014} (or
\cite{mortad-commutatvity-devinatz-2013}), we obtain
\[|B|A\subset A|B|.\]
\end{proof}

Before giving the next lemmas, let
\[B=U|B|=|B|U\]
be the polar decomposition of the self-adjoint $B$, where $U$ is
unitary (cf. \cite{RUD}). Hence
\[B=U^*|B|=|B|U^*.\]
One of the major points is that $U$ is even self-adjoint. To see
this, just re-do the proof of Theorem 12.35 (b) in \cite{RUD} in the
case of a self-adjoint operator. Then use the (self-adjoint!)
Functional Calculus to get that $U$ is self-adjoint. Another proof
may be found in \cite{Bachir-Segres}.  Therefore, $U=U^*$ and
$U^2=I$.

Let us also agree that any $U$ which appears from now on is the $U$
involved in this polar decomposition of $B$.

\begin{lem}\label{(AB)*=UABU lemma}
We have:
\[(AB)^*=UABU\]
so that
\[(AB)^*U=UAB \text{ and } (AB)U=U(AB)^*.\]
\end{lem}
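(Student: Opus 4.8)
The plan is to avoid proving directly that $A|B|$ is self-adjoint (which is essentially what the equality $(AB)^*=UABU$ amounts to, and which would be awkward to establish head-on) and instead to recognize $UABU$ as a \emph{unitary conjugate} of the normal operator $AB$. Since $U$ is unitary with $U^{-1}=U$, the operator $U(AB)U$ is normal, hence closed; and as $(AB)^*$ is itself normal (being the adjoint of the normal operator $AB$), it will suffice to prove the single inclusion $(AB)^*\subset UABU$ and then invoke the maximality of normal operators to turn it into equality.

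First I would record the consequences of the polar decomposition. From $B=U|B|=|B|U$ and $U^2=I$ one gets $BU=UB=|B|$. Reading $UABU$ as the composition $U\circ A\circ B\circ U$, the last two factors collapse, $B\circ U=|B|$, so that
\[UABU=UA|B|.\]
At the same time, writing $U(AB)U=U(AB)U^{-1}$ exhibits $UABU$ as a unitary conjugate of $AB$; since $AB$ is normal, $UABU$ is normal (with domain $U\,D(AB)$) and in particular closed.

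Next I would produce the inclusion. Substituting $B=U|B|$ gives $BA=U|B|A$, and Lemma \ref{|B|A subset A|B| lemma} furnishes $|B|A\subset A|B|$; left multiplication by the everywhere-defined bounded operator $U$ preserves this, so
\[BA=U|B|A\subset UA|B|=UABU.\]
Because $UABU$ is closed, this upgrades to $\overline{BA}\subset UABU$, and Lemma \ref{clos{BA}=(AB)* lemma} identifies $\overline{BA}=(AB)^*$. Thus $(AB)^*\subset UABU$ with both sides normal, and since normal operators are maximally normal we conclude $(AB)^*=UABU$.

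The two remaining displayed identities are then formal: multiplying $(AB)^*=UABU$ on the right by $U$ and using $U^2=I$ gives $(AB)^*U=UAB$, and multiplying on the left by $U$ gives $ABU=U(AB)^*$, that is $(AB)U=U(AB)^*$. The part deserving the most care---the conceptual obstacle---is seeing that one should route through the conjugation picture at all: the naive reduction to ``$A|B|=\overline{|B|A}$'' invites a circular argument, whereas normality of the conjugate together with maximality bypasses it entirely. The only technical points then are the domain bookkeeping behind $UABU=UA|B|$ and the validity of the inclusion $BA\subset UABU$, both of which are routine once $BU=UB=|B|$ is in hand.
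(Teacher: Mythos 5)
Your proof is correct, and it is a close (dual) variant of the paper's own argument rather than a genuinely new one: both proofs have the same skeleton, namely combine $|B|A\subset A|B|$ (Lemma \ref{|B|A subset A|B| lemma}) with $B=U|B|=|B|U$ and $U^2=I$ to obtain one inclusion between $(AB)^*$ and $UABU$, note that both operators are normal ($UABU$ as a unitary conjugate of the normal $AB$, and $(AB)^*$ as the adjoint of a normal operator), and conclude equality from maximal normality of normal operators. The only real difference is the mechanism for transferring the inclusion to the adjoint side. The paper derives $UBAU\subset AB$ and passes to adjoints, computing $(UBAU)^*=U(BA)^*U=UABU$ via Lemma \ref{(AB)*=B*A* B INVERTIBLE!!}; you instead keep the equivalent inclusion $BA\subset UABU$ (the two differ only by right multiplication by the invertible $U$) and pass to \emph{closures}, using that $UABU$ is closed (being normal) and the identification $\overline{BA}=(AB)^*$ of Lemma \ref{clos{BA}=(AB)* lemma}. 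Your route trades the adjoint-of-a-product lemma for the closure lemma, which is equally rigorous and arguably slightly more economical since Lemma \ref{clos{BA}=(AB)* lemma} is needed elsewhere anyway; the domain bookkeeping behind $UABU=UA|B|$ (from $BU=UB=|B|$) and the formal derivation of $(AB)^*U=UAB$ and $(AB)U=U(AB)^*$ by multiplying through by $U$ are handled exactly as in the paper.
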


\begin{proof}
Since $|B|A\subset A|B|$, we have $UBA\subset ABU$. Hence
\[UBAU\subset AB\text{ or } (AB)^*\subset (UBAU)^*.\]
Since $U$ is bounded, self-adjoint and invertible, we clearly have
(by Lemma \ref{(AB)*=B*A* B INVERTIBLE!!})
\[(UBAU)^*=U(BA)^*U=UABU.\]
Since $AB$ is normal, so are $UABU$ and $(AB)^*$ so that
\[(AB)^*\subset UABU\Longrightarrow (AB)^*=UABU\]
because normal operators are maximally normal.
\end{proof}

\begin{lem}\label{|AB|=A|B| lemma}Assume also that $A\geq 0$. Then
$A|B|$ is positive, self-adjoint and we have:
\[|AB|=A|B|.\]
\end{lem}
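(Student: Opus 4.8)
The plan is to prove the two assertions in turn: first that $A|B|$ is non-negative and self-adjoint, and then to deduce $|AB|=A|B|$ from this together with the normality of $AB$.

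For the self-adjointness of $A|B|$, I would start from the commutation $|B|A\subset A|B|$ (Lemma \ref{|B|A subset A|B| lemma}). Taking adjoints and using that $|B|$ is bounded and self-adjoint (so that $(|B|A)^*=A|B|$) gives at once $(A|B|)^*\subset A|B|$, so it remains only to check that $A|B|$ is symmetric and non-negative. The natural tool here is the bounded transform $A_\varepsilon=A(I+\varepsilon A)^{-1}$, which is bounded, self-adjoint and non-negative (this is where $A\geq 0$ enters). Since $|B|A\subset A|B|$ forces $|B|$ to commute with every bounded function of $A$, in particular with the resolvent, each $A_\varepsilon$ commutes with $|B|$; hence $A_\varepsilon|B|$ is a product of two commuting bounded non-negative self-adjoint operators and is therefore itself bounded, self-adjoint and $\geq 0$. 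For $x,y\in D(A|B|)$ we have $|B|x,|B|y\in D(A)$, so $A_\varepsilon|B|x\to A|B|x$ and $A_\varepsilon|B|y\to A|B|y$; passing to the limit in $\langle A_\varepsilon|B|x,y\rangle=\langle x,A_\varepsilon|B|y\rangle$ and in $\langle A_\varepsilon|B|x,x\rangle\geq 0$ yields the symmetry and the non-negativity of $A|B|$. Combined with $(A|B|)^*\subset A|B|$ this gives $A|B|=(A|B|)^*\geq 0$.

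With $T:=A|B|$ now known to be non-negative and self-adjoint, I would pass to the second assertion by writing $B=|B|U$ and noting that $AB=(A|B|)U=TU$, where $U=U^*$ is the unitary from the polar decomposition with $U^2=I$. Using Lemma \ref{(AB)*=B*A* B INVERTIBLE!!} (as $U$ is bounded and invertible) gives $(AB)^*=(TU)^*=U^*T^*=UT$, whence
\[
(AB)^*(AB)=UT\,TU=UT^2U,\qquad (AB)(AB)^*=TU\,UT=T^2 .
\]
Since $AB$ is normal, the two right-hand sides coincide, so $UT^2U=T^2$, i.e.\ $U$ commutes with $T^2$. As $T\geq 0$ is the unique non-negative square root of $T^2$, $U$ commutes with $T$ as well, and therefore $|AB|^2=(AB)^*(AB)=UT^2U=T^2$. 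By uniqueness of the non-negative self-adjoint square root we conclude $|AB|=T=A|B|$, which is exactly the claim.

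I expect the only genuinely delicate point to be the self-adjointness of $A|B|$: the inclusion $(A|B|)^*\subset A|B|$ is immediate, but the reverse (symmetry) cannot be obtained by the naive computation $\langle A|B|x,y\rangle=\langle A^{1/2}|B|x,A^{1/2}y\rangle$, because a general $y\in D(A|B|)$ need not lie in $D(A^{1/2})$. This is precisely why I would route the argument through the bounded approximants $A_\varepsilon$, where all operators involved are everywhere defined and the limits are controlled by $|B|x,|B|y\in D(A)$. Once $A|B|$ is self-adjoint, the remainder is short and purely algebraic, the normality of $AB$ being used only to force $U$ to commute with $T^2$.
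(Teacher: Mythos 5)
Your proof is correct, and while it shares the paper's overall skeleton --- establish that $T:=A|B|$ is positive and self-adjoint, show $|AB|^2=(AB)^*(AB)=T^2$, and conclude by uniqueness of the non-negative self-adjoint square root (the step the paper delegates to Bernau's Theorem 11) --- you justify both intermediate facts differently. Where the paper simply cites Exercise 23, p.\ 113 of Schm\"{u}dgen's book for the positivity and self-adjointness of $A|B|$ (given $|B|A\subset A|B|$ from Lemma \ref{|B|A subset A|B| lemma}), you prove it from scratch via the bounded transform $A_\varepsilon=A(I+\varepsilon A)^{-1}$; this is sound, since $|B|A\subset A|B|$ does force $|B|$ to commute with the resolvent of $A$, and the density of $D(A|B|)$ (needed to form the adjoint) is automatic from that same inclusion, as $D(A)\subset D(A|B|)$. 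More substantively, you bypass Lemma \ref{(AB)*=UABU lemma} entirely: the paper obtains $(AB)^*=UABU$ through a Fuglede--Putnam-type argument plus maximality of normal operators, whereas you observe that $AB=TU$ holds as an exact operator equality (domains included, because $B=|B|U$ with $U$ bounded), so $(AB)^*=(TU)^*=U^*T^*=UT$ drops out of Lemma \ref{(AB)*=B*A* B INVERTIBLE!!} once $T=T^*$ is known --- and note that $UT$ coincides with the paper's $UABU$, so you have in effect re-derived that lemma by purely algebraic means. This shortcut is genuinely simpler, and it works precisely because the present lemma assumes $A\geq 0$ (which is what makes $T$ self-adjoint), an assumption the paper's Lemma \ref{(AB)*=UABU lemma} does not have at its disposal. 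One cosmetic remark: your detour through ``$U$ commutes with $T$'' is dead weight --- normality already gives $(AB)^*(AB)=(AB)(AB)^*=T^2$ directly, which is all you need before invoking square-root uniqueness.
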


\begin{proof}
First, remember by Lemma \ref{|B|A subset A|B| lemma} that
$|B|A\subset A|B|$. Hence $A|B|$ is positive and self-adjoint as
both $|B|$ and $A$ are commuting and positive (see e.g. Exercise 23,
Page 113 of \cite{SCHMUDG-book-2012}). Now, by Lemma \ref{(AB)*=UABU
lemma} we have
\[AB(AB)^*=ABUABU=A|B|A|B|=(A|B|)^2.\]
Since $AB$ is normal, we have
\[|AB|^2=(AB)^*AB=(A|B|)^2\]
so that (for instance by Theorem 11 of \cite{Bernau
JAusMS-1968-square root})
\[|AB|=A|B|.\]
\end{proof}

\begin{lem}\label{UAB normal lemma}
The operator $UAB$ is normal.
\end{lem}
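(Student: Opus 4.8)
The plan is to prove the stronger statement that $UAB$ is in fact self-adjoint; since every self-adjoint operator is normal, this yields the lemma at once. The entire argument rests on computing the adjoint of $UAB$ and recognizing it, via Lemma \ref{(AB)*=UABU lemma}, as $UAB$ itself.

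First I would record that $UAB$ is densely defined: because $U$ and $B$ are everywhere-defined and bounded, $D(UAB)=D(AB)$, which is dense since $AB$ is normal (hence densely defined and closed). Next, since $U\in B(H)$ is everywhere defined, the product rule for adjoints gives the genuine equality $(UAB)^*=(AB)^*U^*$. Using $U=U^*$ this becomes $(UAB)^*=(AB)^*U$, and here Lemma \ref{(AB)*=UABU lemma} enters: from $(AB)^*=UABU$ and $U^2=I$ one gets $(AB)^*U=UABU^2=UAB$. Combining these,
\[(UAB)^*=(AB)^*U^*=(AB)^*U=UAB,\]
so $UAB$ equals its own adjoint and is therefore self-adjoint, a fortiori normal.

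The step requiring the most care—and where such arguments most often go astray—is the adjoint identity $(UAB)^*=(AB)^*U^*$ with \emph{equality} rather than mere inclusion; this is precisely where the boundedness and everywhere-definedness of $U$ are essential (cf. Lemma \ref{(AB)*=B*A* B INVERTIBLE!!}), and one should confirm the domains agree by tracking $D\big((AB)^*U\big)=\{f:Uf\in D((AB)^*)\}$ against $D(UAB)=D(AB)$, both of which collapse to the same set once one substitutes $(AB)^*=UABU$ and invokes $U^2=I$.

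It is worth noting why one should not attempt normality directly. A direct computation gives $(UAB)^*(UAB)=(AB)^*\,U^2\,AB=(AB)^*AB$, whereas $(UAB)(UAB)^*=U\,AB(AB)^*\,U$. Matching these through the normality of $AB$ would force $U$ to commute with $AB(AB)^*=(A|B|)^2$, that is, with $A|B|=|AB|$ (Lemma \ref{|AB|=A|B| lemma}); since $U$ already commutes with $|B|$ this would in turn demand $UA=AU$, a commutation not at our disposal. Routing through the adjoint relation of Lemma \ref{(AB)*=UABU lemma} sidesteps this missing ingredient, which is exactly why the present approach is preferable.
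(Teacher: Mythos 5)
Your proof is correct, and it actually establishes more than the lemma asks: you show $UAB$ is \emph{self-adjoint}, not merely normal. The paper takes a different route: it first observes that $UAB$ is closed (since $U$ is invertible and $AB$ is closed), and then verifies the definition of normality by computing both products, namely
\[UAB(UAB)^*=UAB(AB)^*U=(AB)^*U(AB)^*U=(AB)^*ABU^2=(AB)^*AB\]
and $(UAB)^*UAB=(AB)^*U^2AB=(AB)^*AB$, using \emph{both} identities recorded in Lemma \ref{(AB)*=UABU lemma}, i.e. $(AB)^*U=UAB$ and $(AB)U=U(AB)^*$. Your argument instead exploits the equality form of the adjoint product rule for the bounded, everywhere-defined $U$, giving $(UAB)^*=(AB)^*U^*=(AB)^*U=UAB$ in one stroke; closedness then comes for free from self-adjointness, and your stronger conclusion is consistent with (and in hindsight explains) the later development of the paper, where one finds $UAB=UA|B|U\cdot U^{-1}\cdot U=A|B|=|AB|\geq 0$ via Lemmas \ref{|AB|=A|B| lemma} and \ref{U|AB|=|AB|U lemma}. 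One correction to your closing remark: the direct normality computation does \emph{not} founder on a missing commutation $UA=AU$. The step you flag, matching $UAB(AB)^*U$ with $(AB)^*AB$, is exactly what the paper carries out using $UAB=(AB)^*U$ and then $U(AB)^*=(AB)U$, both already available from Lemma \ref{(AB)*=UABU lemma}; so the direct route is viable, merely longer and yielding the weaker conclusion. Your approach buys brevity and the sharper statement; the paper's buys nothing extra here except that it never needs the adjoint product rule in its equality form, relying instead on the closedness observation and algebraic substitution.
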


\begin{proof}
First, $UAB$ is closed as $U$ is invertible and $AB$ is closed. Now,
\begin{align*}
UAB(UAB)^*&=UAB(AB)^*U\\
&=(AB)^*U(AB)^*U\text{ (by Lemma \ref{(AB)*=UABU lemma})}\\
&=(AB)^*ABU^2\text{ (by Lemma \ref{(AB)*=UABU lemma})}\\
&=(AB)^*(AB).
\end{align*}
On the other hand,
\[(UAB)^*UAB=(AB)^*U^2AB=(AB)^*AB,\]
establishing the normality of $UAB$.
\end{proof}

\begin{lem}\label{U|AB|=|AB|U lemma}
We have:
\[U|AB|=|AB|U.\]
\end{lem}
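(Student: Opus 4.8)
The plan is to reduce the desired commutation $U|AB| = |AB|U$ to the commutation of $U$ with the square $|AB|^2$, and then to transfer this to $|AB|$ itself via the uniqueness of the positive self-adjoint square root. First I would record the two intertwining identities furnished by Lemma \ref{(AB)*=UABU lemma}, namely $(AB)^*U = UAB$ and $(AB)U = U(AB)^*$, together with the consequence $U(AB)^* = ABU$, which is obtained by left-multiplying the equality $(AB)^* = UABU$ by $U$ and using $U^2 = I$. The key computation is then to conjugate $|AB|^2 = (AB)^*(AB)$ by the unitary $U$:
\[
U|AB|^2U = U(AB)^*(AB)U = \bigl[U(AB)^*\bigr]\bigl[(AB)U\bigr] = (ABU)\bigl(U(AB)^*\bigr) = AB(AB)^*.
\]
Every step here is a genuine operator equality, since $U$ is bounded, self-adjoint and invertible, so the intertwining relations of Lemma \ref{(AB)*=UABU lemma} may be multiplied through without passing to inclusions. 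Finally, because $AB$ is normal, one has $AB(AB)^* = (AB)^*(AB) = |AB|^2$, and therefore $U|AB|^2U = |AB|^2$, equivalently $U|AB|^2 = |AB|^2 U$.

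It then remains to pass from the square to $|AB|$ itself. Since $U$ is unitary and $U|AB|^2U^* = |AB|^2$ (recall $U^* = U$), the positive self-adjoint operator $|AB|^2$ is invariant under conjugation by $U$. Applying the Borel functional calculus to the function $t \mapsto \sqrt{t}$ (or, equivalently, invoking the uniqueness of the positive square root, as in \cite{Bernau JAusMS-1968-square root}) yields $U|AB|U^* = |AB|$, that is, $U|AB| = |AB|U$, which is the assertion. One may also note, if preferred, that $|AB| = A|B|$ by Lemma \ref{|AB|=A|B| lemma}, so the same conclusion could be phrased as the commutation of $U$ with $A|B|$.

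The main obstacle I anticipate is the careful bookkeeping of domains in the central conjugation identity: one must ensure that the substitutions $U(AB)^* = ABU$ and $(AB)U = U(AB)^*$ are equalities of unbounded operators, with matching domains, rather than mere inclusions, so that the chain collapses to the honest equality $U|AB|^2U = |AB|^2$ and not to a one-sided inclusion. This is precisely the point at which the self-adjointness and invertibility of $U$ are essential, for they make conjugation by $U$ a domain-preserving bijection on the underlying space. Once this equality is secured, the transfer to $|AB|$ through the spectral theorem is entirely routine.
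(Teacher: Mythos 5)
Your proof is correct, but it takes a recognizably different route from the paper's at both stages. For the pivot identity $U|AB|^2=|AB|^2U$, the paper invokes the normality of $UAB$ (Lemma \ref{UAB normal lemma}): writing out $UAB(UAB)^*=(UAB)^*UAB$ gives $UAB(AB)^*U=(AB)^*AB$, which after right-multiplication by $U$ and the normality of $AB$ yields the same identity you obtain. You instead inline the algebra, conjugating $(AB)^*AB$ directly with the intertwining equalities of Lemma \ref{(AB)*=UABU lemma}; since those are genuine operator equalities and composition of unbounded operators is associative, your chain $U(AB)^*(AB)U=(ABU)(U(AB)^*)=AB(AB)^*=|AB|^2$ is sound, and it renders Lemma \ref{UAB normal lemma} dispensable for this particular lemma (unsurprisingly, as the paper's proof of that lemma runs on the same substitutions you use). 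The second stage differs more substantially: the paper applies the square-root commutation theorem of \cite{Bernau JAusMS-1968-square root} to get only the inclusion $U|AB|\subset|AB|U$, and must then repair one-sidedness by the maximality device of Lemma \ref{(AB)*=UABU lemma} (conjugate to $U|AB|U\subset|AB|$, note both sides are self-adjoint, and use that self-adjoint operators are maximally symmetric). You observe that the commutation is a unitary conjugation, $U|AB|^2U^*=|AB|^2$, so uniqueness of the positive self-adjoint square root --- equivalently, invariance of the Borel functional calculus under unitary conjugation --- gives $U|AB|U^*=|AB|$ in one step, with no inclusion to upgrade. Your finish is cleaner and exploits the special structure ($U$ unitary and self-adjoint) that the paper's more general Bernau-based toolkit does not use at this point; the paper's route, in exchange, records the normality of $UAB$ as a separate fact and stays uniform with the commutation arguments it uses elsewhere.
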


\begin{proof}
Since $UAB$ is normal, we clearly have
\[UAB(AB)^*U=(AB)^*AB\]
which entails
\[UAB(AB)^*=U(AB)^*AB=(AB)^*ABU,\]
i.e.
\[U|AB|^2=|AB|^2U.\]
Hence (by \cite{Bernau JAusMS-1968-square root}), we are sure at
least that $U|AB|\subset |AB|U$. Since $|AB|$ is self-adjoint, a
similar argument to that used in the proof of Lemma \ref{(AB)*=UABU
lemma} gives us \[U|AB|=|AB|U.\]
\end{proof}

\begin{lem}\label{BA subset AB lemma}Assume also that $A\geq 0$.
Then $B$ commutes with $A$, i.e. $BA\subset AB$.
\end{lem}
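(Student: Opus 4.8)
The plan is to use the self-adjoint polar decomposition $B=U|B|=|B|U$ (with $U=U^*$ and $U^2=I$) to rewrite $BA$ and then push everything onto the positive part, where the commutation relations are already known. First I would record that, since $B$ is bounded, $D(BA)=D(A)$ and
\[BA=U|B|A.\]
Because Lemma \ref{|B|A subset A|B| lemma} gives $|B|A\subset A|B|$, and left multiplication by the bounded operator $U$ preserves inclusions, this immediately yields
\[BA=U|B|A\subset UA|B|.\]

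The heart of the argument is then to recognize the right-hand side as $AB$. Here positivity $A\ge 0$ enters through Lemma \ref{|AB|=A|B| lemma}, which gives the operator identity $A|B|=|AB|$, and through Lemma \ref{U|AB|=|AB|U lemma}, which says that $U$ commutes with $|AB|$. Using in addition that $U$ and $|B|$ commute and that $U|B|=B$, I would assemble the chain of operator equalities
\[UA|B|=U|AB|=|AB|U=A|B|U=AU|B|=AB.\]
Threading the earlier inclusion through this identity gives $BA\subset AB$, which is precisely the claim.

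The one point requiring care is that every link in the displayed chain must be read as an equality of \emph{unbounded} operators, i.e. equal domains together with equal action, and not merely agreement on a dense core. The potentially troublesome links are $A|B|U=AU|B|$ and $U|AB|=|AB|U$: for the first I would note that $|B|U=U|B|$ forces the defining conditions $|B|Ux\in D(A)$ and $U|B|x\in D(A)$ to coincide, so that the two domains are literally the same set, while the second is taken verbatim from Lemma \ref{U|AB|=|AB|U lemma}. Once these domain identifications are checked, the proof closes at once with no further hypotheses needed. I would expect this bookkeeping, rather than any conceptual difficulty, to be the only obstacle, since all the substantive commutation facts have already been isolated in the preceding lemmas.
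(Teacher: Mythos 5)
Your proof is correct and follows essentially the same route as the paper's: both use $|B|A\subset A|B|$ (Lemma \ref{|B|A subset A|B| lemma}) to get $BA=U|B|A\subset UA|B|$, and then identify $UA|B|$ with $AB$ via the chain $UA|B|=U|AB|=|AB|U=A|B|U=AB$ from Lemmas \ref{|AB|=A|B| lemma} and \ref{U|AB|=|AB|U lemma}. Your extra domain bookkeeping is sound but largely automatic, since $|B|U=U|B|=B$ holds as an identity of bounded, everywhere-defined operators, so the compositions with $A$ on the left have identical domains by definition.
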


\begin{proof}
We have by Lemmas \ref{|AB|=A|B| lemma} \& \ref{U|AB|=|AB|U lemma}
\[U|AB|=|AB|U\Longleftrightarrow UA|B|=A|B|U\Longleftrightarrow UA|B|=AB.\]
Using Lemma \ref{|B|A subset A|B| lemma}
\[U|B|A\subset AB \text{ or } BA\subset AB.\]
\end{proof}

We are now ready to prove Theorem \ref{Main theorem 2}.

\begin{proof}
By Lemma \ref{BA subset AB lemma}, $BA\subset AB$ so that
\[(AB)^*\subset AB.\]
Therefore, $AB$ is self-adjoint as we already know that
$D(AB)=D[(AB)^*]$. Finally, Lemma \ref{clos{BA}=(AB)* lemma} gives
\[AB=\overline{BA}.\]
\end{proof}

The question of the essential self-adjointness of a product of two
self-adjoint operators is not easy. In \cite{Mortad-PAMS2003}, a
three page counterexample was constructed to show that if $A$ and
$B$ are two unbounded self-adjoint operators such that $B\geq 0$,
then the normality of $\overline{AB}$ does not entail its
self-adjointness. Related to the question of essential
self-adjointness of products, the reader may consult
\cite{Moller-ess-s.a.}. Having said this, now we may rephrase the
result of Theorem \ref{Main theorem 2} as follows:

\begin{cor}
Let $A$ and $B$ be two self-adjoint operators where only $B$ is
bounded. Assume further that $A$ is positive and that
$\overline{BA}$ is normal. Then $BA$ is essentially self-adjoint.
\end{cor}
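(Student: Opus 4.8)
The plan is to reduce the assertion to Theorem \ref{Main theorem 2}. Since that theorem takes ``$AB$ normal'' as hypothesis and already concludes that $\overline{BA}$ is self-adjoint, it suffices to show that, under the present hypotheses, ``$\overline{BA}$ normal'' forces ``$AB$ normal.'' Once this is established, Theorem \ref{Main theorem 2} yields the self-adjointness of $\overline{BA}$, which is exactly the essential self-adjointness of $BA$. In effect the corollary is merely a rephrasing of Theorem \ref{Main theorem 2} in which the normality hypothesis is transferred from $AB$ to its adjoint.

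First I would record the structural facts about $AB$. Because $B$ is bounded and $A$ is closed (being self-adjoint), the closedness lemma of the introduction gives that $AB$ is closed. Next, the very notation $\overline{BA}$ in the hypothesis presupposes that $BA$ is closable; equivalently $(BA)^*$ is densely defined. Since $B$ is bounded and self-adjoint and $A$ is self-adjoint, one has the exact identity $(BA)^*=A^*B^*=AB$, so $AB$ is densely defined as well. Combining these, $\overline{BA}=(BA)^{**}=(AB)^*$, which is precisely Lemma \ref{clos{BA}=(AB)* lemma}.

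Next I would pass to adjoints. Taking the adjoint of $\overline{BA}=(AB)^*$ and using that $AB$ is closed gives $(\overline{BA})^*=(AB)^{**}=\overline{AB}=AB$. Since the adjoint of any normal operator is again normal, the assumed normality of $\overline{BA}$ transfers to $(\overline{BA})^*=AB$. At this stage every hypothesis of Theorem \ref{Main theorem 2} is in force: $A$ and $B$ are self-adjoint with $B$ bounded, $A$ is positive, and $AB$ is normal. That theorem then asserts that $\overline{BA}$ is self-adjoint, i.e. $BA$ is essentially self-adjoint.

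The only genuinely delicate point will be the domain book-keeping in the second paragraph: one must verify that the assumption ``$\overline{BA}$ normal'' really does guarantee that $BA$ is closable (so that $(AB)^*$ is densely defined and Lemma \ref{clos{BA}=(AB)* lemma} applies) and that the identity $(BA)^*=AB$ holds on the nose for bounded self-adjoint $B$. These are standard unbounded-operator identities, and together with the elementary fact that the adjoint of a normal operator is normal they reduce the whole statement to an immediate appeal to Theorem \ref{Main theorem 2}.
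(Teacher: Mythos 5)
Your proposal is correct and follows essentially the same route as the paper's own proof: transfer the normality from $\overline{BA}$ to its adjoint $AB=(BA)^{*}$, invoke Theorem \ref{Main theorem 2} to get the self-adjointness of $AB$, and conclude via the identity $\overline{BA}=(AB)^{*}$ that $BA$ is essentially self-adjoint. The additional domain book-keeping you supply (closedness of $AB$, the exact identity $(BA)^{*}=A^{*}B^{*}=AB$ for bounded $B$, and closability of $BA$ being implicit in the hypothesis) merely makes explicit what the paper's three-line proof leaves tacit.
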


\begin{proof}
Since $\overline{BA}$ is normal, so is $(BA)^*$ or $AB$. Then by
Theorem \ref{Main theorem 2}, $AB$ is self-adjoint. By Lemma
\ref{clos{BA}=(AB)* lemma}, $\overline{BA}=(AB)^*$ so that
$\overline{BA}$ is self-adjoint.
\end{proof}

In the end, we give an answer to an open problem from
\cite{Chellali-Mortad} concerning commutativity up to a factor.

\begin{pro}
Let $A$ and $B$ be self-adjoint operators where $B$ is bounded.
Assume that $BA\subset \lambda AB\neq 0$ where $\lambda\in \C$. Then
$\lambda=1$ if $A$ is positive.
\end{pro}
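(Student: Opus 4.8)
The plan is to pin $\lambda$ down in two stages: first show $|\lambda|=1$ using only closedness and adjoints, and then use the positivity of $A$ to force $\lambda$ onto the positive real axis, so that the two together give $\lambda=1$. Note first that $\lambda\neq0$ (otherwise $\lambda AB=0$) and that $AB\neq0$; moreover $BA\subset\lambda AB$ gives $D(A)\subset D(AB)$, so $N:=AB$ is densely defined, and it is closed because $B$ is bounded. By Lemma \ref{clos{BA}=(AB)* lemma}, $N^*=\overline{BA}$.

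For the first stage I would take closures in $BA\subset\lambda AB$ (the right-hand side being closed) to get $N^*=\overline{BA}\subset\lambda N$, and then take adjoints of this inclusion; using $N^{**}=N$ this yields $\bar\lambda N^*\subset N$. Comparing the two inclusions on $D(N^*)$ gives $\lambda Nx=N^*x=\bar\lambda^{-1}Nx$, whence $(|\lambda|^2-1)Nx=0$ for every $x\in D(N^*)$. If $|\lambda|\neq1$ this forces $N^*=0$ on its dense domain, hence $N=N^{**}=0$, i.e. $AB=0$, a contradiction. Therefore $|\lambda|=1$. This stage uses no positivity.

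For the second stage I assume $\lambda\neq1$, so that (being unimodular) $\lambda$ is not a positive real number, and I aim again for the contradiction $AB=0$. Rewriting the hypothesis as $A(By)=\lambda^{-1}B(Ay)$ for $y\in D(A)$ and feeding in $y=(A-\lambda w)^{-1}x$ for $w,\lambda w\notin[0,\infty)$, I would obtain the resolvent intertwining
\[B(A-\lambda w)^{-1}=\lambda^{-1}(A-w)^{-1}B,\qquad\text{i.e.}\qquad B(A-z)^{-1}=\lambda^{-1}(A-z/\lambda)^{-1}B.\]
Since $A\geq0$ we have $\sigma(A)\subset[0,\infty)$, and because $\lambda$ is not positive real the point $z/\lambda$ stays at a positive distance from $[0,\infty)$ whenever $z$ is near $(0,\infty)$; hence the right-hand side is analytic across $(0,\infty)$. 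Passing to spectral projections of $A$ via Stone's formula (setting $z=t\pm i\varepsilon$ and letting $\varepsilon\downarrow0$ over $t$ in an interval $[c,d]\subset(0,\infty)$), the jump of $(A-\,\cdot\,)^{-1}$ at $t/\lambda$ vanishes, so $BE([c,d])=0$ for all such intervals, and therefore $BE((0,\infty))=0$, where $E$ denotes the spectral measure of $A$.

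To close, since $B$ and $E((0,\infty))$ are self-adjoint, $E((0,\infty))B=(BE((0,\infty)))^*=0$; as $A\geq0$ the complementary projection $I-E((0,\infty))=E(\{0\})$ projects onto $\ker A$, so $\ran B\subset\ker A$. Then $Bx\in\ker A\subset D(A)$ and $ABx=0$ for every $x$, i.e. $AB=0$, contradicting $AB\neq0$; hence $\lambda=1$. The routine parts are the adjoint bookkeeping in the first stage and the resolvent computation in the second. I expect the one genuinely delicate point to be the passage from the resolvent identity to $BE((0,\infty))=0$: since $A$ may have purely continuous spectrum on $(0,\infty)$, a naive Riesz-projection contour would be forced to cross $\sigma(A)$, which is why I would route this step through Stone's formula (or an equivalent boundary-value/analytic-continuation argument) rather than through a closed contour.
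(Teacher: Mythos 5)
Your proof is correct, but it takes a genuinely different route from the paper's. The paper's argument is essentially three lines given its machinery: it quotes Proposition 2.2 of \cite{Chellali-Mortad} to conclude that $AB$ is normal, invokes Theorem \ref{Main theorem 2} to upgrade this to self-adjointness, and then notes that $\frac{1}{\lambda}BA\subset AB$ gives $AB=(AB)^*\subset \frac{1}{\lambda}AB$, which, since $D(AB)=D(\alpha AB)$ for every $\alpha\neq 0$, forces $\lambda=1$ (modulo a harmless $\lambda$-versus-$\overline{\lambda}$ slip when the adjoint is taken, which does not affect the conclusion). You bypass both external ingredients. Your first stage extracts $|\lambda|=1$ purely from closure/adjoint bookkeeping: the steps check out, since $D(A)\subset D(AB)$ makes $N=AB$ densely defined and closed, Lemma \ref{clos{BA}=(AB)* lemma} gives $N^*=\overline{BA}\subset\lambda N$, adjoining gives $\overline{\lambda}N^*\subset N^{**}=N$, and in the degenerate case $N^*=0$ on the dense domain $D(N^*)$ indeed yields $N=(N^*)^*=0$, contradicting $AB\neq0$; notably this stage uses no positivity, consistent with what is known from \cite{Chellali-Mortad}. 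Your second stage rules out unimodular $\lambda\neq1$ by the resolvent intertwining $B(A-z)^{-1}=\lambda^{-1}(A-z/\lambda)^{-1}B$, which follows correctly from $BAy=\lambda ABy$ on $D(A)$, and then exploits positivity exactly where it matters: $\sigma(A)\subset[0,\infty)$ while the rotated ray $\lambda^{-1}(0,\infty)$ stays uniformly away from $[0,\infty)$ over $[c,d]\subset(0,\infty)$, so Stone's formula kills the jump and gives $BE((0,\infty))=0$ for the spectral measure $E$ of $A$, whence $\ran B\subset\ker A$ and $AB=0$, a contradiction. The one detail you flag is the right one, and your routing through Stone's formula handles it; the only residual crumb is the endpoint atoms (the $\frac{1}{2}E(\{c\})$ and $\frac{1}{2}E(\{d\})$ terms in Stone's formula), which disappear upon multiplying on the right by $E((c,d))$, or by choosing $c,d$ outside the countable set of eigenvalues, before letting $c\downarrow0$ and $d\uparrow\infty$. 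Comparing the two approaches: the paper's proof is far shorter and yields the self-adjointness of $AB$ en route, but it rests on the full strength of Theorem \ref{Main theorem 2} plus an external normality result; yours is self-contained spectral theory, never needs $AB$ to be normal, and cleanly separates what closedness alone gives ($|\lambda|=1$) from what the positivity of $A$ adds ($\lambda\notin e^{i\theta}$, $\theta\neq0$), which makes the role of each hypothesis more transparent.
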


\begin{proof}
By Proposition 2.2 of \cite{Chellali-Mortad}, we already know that
$AB$ is normal. By Theorem \ref{Main theorem 2}, $AB$ is then
self-adjoint. Now,
\[BA\subset \lambda AB\Longrightarrow \frac{1}{\lambda}BA\subset AB.\]
Hence
\[AB=(AB)^*\subset \frac{1}{\lambda}AB.\]
But $D(AB)=D(\alpha AB)$ for any $\alpha\neq 0$. Therefore,
\[AB=\frac{1}{\lambda}AB \text{ or simply } \lambda=1.\]
\end{proof}

\section{Conclusion}

In this conclusion, we summarize all the related results to the
problem considered in this paper. These are gathered from the
present paper, \cite{Mortad-PAMS2003} and \cite{Mortad-IEOT-2009}:

\begin{thm}Let $A$ and $B$ be two self-adjoint operators. Set $N=AB$
and $M=BA$.
\begin{enumerate}
  \item If $A,B\in B(H)$ (one of them is positive) and $N$ (resp. $M$) is
  normal, then $N$ (resp. $M$) is self-adjoint. In either case, we
  also have $AB=BA$.
  \item If only $B\in B(H)$, $B\geq 0$ and $N$ (resp. $M$) is
  normal, then $N$ (resp. $M$) is self-adjoint. Also $BA\subset AB$
  (resp. $BA=AB$).
  \item If $B\in B(H)$, $A\geq 0$ and $N$ (resp. $M$) is normal,
  then $N$ (resp. $M$) is self-adjoint. Also $BA\subset AB$
  (resp. $BA=AB$).
  \item If $B\in B(H)$ and either $A$ or $B$ is positive, then
  $\overline{M}$ normal gives the essential self-adjointness of $M$.
  \item If both $A$ and $B$ are unbounded and $N$ is normal, then it
  is self-adjoint whenever $B\geq 0$.
  \item If both $A$ and $B$ are unbounded and $\overline{N}$ is normal, then
  $N$ need not be essentially self-adjoint even if $B\geq 0$.
  \item If both $A$ and $B$ are unbounded and $M$ is normal, then it
  not necessarily self-adjoint even when $B\geq 0$.
\end{enumerate}
\end{thm}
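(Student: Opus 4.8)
The plan is to treat this as a compilation theorem: each of the seven clauses is a separate assertion, and the work lies not in a single unified argument but in correctly matching each clause to an already-established positive result or to a known counterexample, while carefully tracking the direction of the commutativity conclusion ($BA\subset AB$ versus $BA=AB$). So I would organize the proof as seven short paragraphs, one per item, with the affirmative clauses (1)--(5) reduced to citations and the negative clauses (6)--(7) settled by exhibiting or invoking the relevant counterexamples.

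For the affirmative half I would proceed as follows. Clauses (3) are exactly Theorems \ref{Main theorem 1} and \ref{Main theorem 2} of the present paper: when $M=BA$ is normal we get $BA=AB$ (the $M$-case), and when $N=AB$ is normal we get $\overline{BA}=AB$ together with $BA\subset AB$ (the $N$-case), which is precisely the stated dichotomy. Clause (4) is the Corollary immediately following Theorem \ref{Main theorem 2}, giving essential self-adjointness of $M$ from normality of $\overline{M}$; here I would note that the statement allows either $A\ge 0$ or $B\ge 0$, so one invokes Theorem \ref{Main theorem 2} in the $A\ge 0$ subcase and the corresponding result of \cite{Mortad-PAMS2003} in the $B\ge 0$ subcase. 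Clauses (1), (2) and (5) are drawn from \cite{Mortad-PAMS2003}: clause (2) is Theorem \ref{Mortad-2003 product self-adjoint} supplemented by the commutativity relation established there, clause (5) is the $AB$-generalization to two unbounded self-adjoint operators mentioned in the Introduction, and clause (1) is the fully bounded specialization, where both $AB$ normal and $BA$ normal yield self-adjointness and hence $AB=BA$.

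For the negative half, clause (6) is settled by the three-page counterexample constructed in \cite{Mortad-PAMS2003}, which produces unbounded self-adjoint $A$ and $B$ with $B\ge 0$ such that $\overline{AB}$ is normal but not self-adjoint; clause (7) is settled by the example of \cite{Mortad-IEOT-2009} showing that with both operators unbounded the normality of $M=BA$ no longer forces self-adjointness even under $B\ge 0$. In both cases I would simply reference the constructions rather than reproduce them.

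The main obstacle I anticipate is purely one of bookkeeping rather than of new mathematics: making sure that in each clause the roles of $N=AB$ and $M=BA$, the location of the positivity hypothesis, and the exact commutativity conclusion are transcribed faithfully from their sources, since the distinction between $BA\subset AB$ and $BA=AB$ flips depending on whether it is $N$ or $M$ that is assumed normal. A secondary delicate point is clause (4), where I must confirm that the essential self-adjointness conclusion is genuinely available in both the $A\ge 0$ and $B\ge 0$ subcases and that the two underlying results combine to cover the disjunctive hypothesis as stated.
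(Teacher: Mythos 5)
Your proposal is correct and matches the paper's own treatment: the theorem is stated as a summary with no written proof, the paper saying only that the clauses are ``gathered from the present paper, \cite{Mortad-PAMS2003} and \cite{Mortad-IEOT-2009}'', which is precisely your clause-by-clause citation scheme (Theorems \ref{Main theorem 1} and \ref{Main theorem 2} and the corollary for (3)--(4), Theorem \ref{Mortad-2003 product self-adjoint} and its extensions in \cite{Mortad-PAMS2003} for (1), (2), (5) and the counterexample in (6), and \cite{Mortad-IEOT-2009} for (7)). Your bookkeeping of where $BA\subset AB$ versus $BA=AB$ holds, and of the two subcases in clause (4), is also faithful to the sources.
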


\section{Acknowledgements}

Rehder \cite{Reh} showed these results for all bounded self-adjoint
operators and also provided a counterexample to show the necessity
for some positivity. Neither the corresponding author of this paper
(back in \cite{Mortad-PAMS2003}) nor the authors of \cite{Alb-Spain}
were aware of his paper. Therefore Rehder deserves credit for being
the first who investigated this topic in the bounded case (in
particular his use of the Fuglede-Putnam theorem).

In the end, Professor Jan Stochel has recently communicated to us a
new variation (unpublished yet) of our Theorem \ref{Main theorem 2}.
We have also appreciated his useful comments on our results and so
we have done with Professor Konrad Schm\"{u}dgen.

\end{document}